\numberwithin{equation}{section}
\theoremstyle{plain}
\newtheorem{Thm}{Theorem}[section]
\newtheorem{Rem}{Remark}[section]
\newtheorem{Lem}{Lemma}[section]
\theoremstyle{definition}
\date{}
\begin{document}

\title[Self-Similar Singularity]{Self-Similar Singularity of a 1D Model for the 3D Axisymmetric Euler Equations}
\author{Thomas Y. Hou}
\author{Pengfei Liu}\thanks{California Institute of Technology, Applied and Computational Mathematics.}

\begin{abstract}
We investigate the self-similar singularity of a 1D model for the 3D axisymmetric Euler equations, which is motivated by a particular singularity formation scenario observed in numerical computation. We prove the existence of a discrete family of self-similar profiles for this model and analyze their far-field properties. The self-similar profiles we find agree with direct simulation of the model and seem to have some stability. 
\end{abstract}
\maketitle 

\section{Introduction and Main Results}
\label{intro}
Whether the 3D Euler equations develop finite-time singularity is regarded as one of the most important open problems in mathematical fluid mechanics. Interested readers may consult the survey \cite{gibbon2008three} and references therein for more historical background about this outstanding problem. In this paper we investigate the self-similar singularity of a 1D model for the 3D axisymmetric Euler equations, which is 
motivated by a particular singularity formation scenario observed in numerical computation ~\cite{luo2013potentially}. It is hoped that this work may help to understand the singularity formation of the 3D Euler equations.

In the numerical computation of Luo and Hou \cite{luo2013potentially}, the 3D axisymmetric Euler equations~ \cite{majda2002vorticity} are numerically solved in a periodic cylinder,
\begin{subequations}
	\label{3deuler}
	\begin{align}
		u_{1,t}+u^ru_{1,r}+u^zu_{1,z}&=2u_1\phi_{1,z},\\
		w_{1,t}+u^rw_{1,r}+u^zw_{1,z}&=(u_1^2)_z,\\
		-[\partial_r^2+(3/r)\partial_r+\partial_z^2]\phi_1&=w_1,\label{ebs}
	\end{align}
where $u^r=-r\phi_{1,z}$, $u^z=2\phi_1+r\phi_{1,r}$ are radial and axial velocity, and $u_1=u^\theta/r$, $w_1=w^\theta/r$, $\phi_1=\phi^\theta/r$ are transformed angular velocity, vorticity and stream function respectively.
\end{subequations}

According to the numerical results reported in \cite{luo2013potentially}, the solutions to (\ref{3deuler}) develop self-similar singularity in the meridian plane for certain initial conditions with no flow boundary condition at $r=1$. The solid boundary and special symmetry of $u^\theta$, $\omega^\theta$ and $\psi^\theta$ along the axial direction seem to make the flow in the meridian plane remain hyperbolic near the singularity point and be responsible for the observed finite-time singularity. A 1D model which approximates the dynamics of the 3D axisymmetric Euler equations along the solid boundary of the periodic cylinder $r=1$ has been proposed and investigated by Hou and Luo in ~\cite{hou2013finite}. The finite-time singularity of this model is proved very recently by Choi, Hou, Kiselev, Luo, Sverak and Yao in \cite{choi2014on}. Motivated by this new singularity formation scenario, Kiselev and Sverak ~\cite{kiselev2013small} construct an example of 2D Euler solutions in a setting similar to \cite{luo2013potentially}. In this example, the gradient of vorticity is proved to exhibit double exponential growth in time, which is known to be the fastest possible rate of growth for the 2D Euler equations. This example provides further evidence that the new singularity formation scenario reported in~\cite{luo2013potentially} is an interesting candidate to investigate the 3D Euler singularity. 

Inspired by the work of ~\cite{hou2013finite} and \cite{kiselev2013small},  Choi, Kiselev, and Yao proposed the following 1D model (we call it the CKY model for short) \cite{choi2013finite} on $[0,1]$:
\begin{subequations}
	\label{ckymodel}
\begin{align}
	\partial_tw+u\partial_xw&=\partial_{x}\rho,\label{wgrow}\\
\partial_t\rho+u\partial_x\rho&=0,\label{transportrho}\\
	u(x,t)=-x&\int_x^1\frac{w(y,t)}{y}\mathrm{d}y,\label{BS}\\
	w(0,t)=0,\quad &\rho(0,t)=0,\quad \partial_x\rho(0,t)=0.\label{bccky}
\end{align}
\end{subequations}

This 1D model, whose finite-time singularity has been proved in \cite{choi2013finite}, can be viewed as a simplified approximation to the 1D model proposed by Hou and Luo in \cite{hou2013finite}. Like the 1D model
of Hou and Luo, the CKY model approximates the 3D axisymmetric Euler equations (\ref{3deuler}) on the boundary of the cylinder $r=1$ with 
\begin{equation}
	\rho\sim u_1^2,\quad w\sim w_1,\quad u\sim u^z.
\end{equation}

The positiveness of $\rho_x(x,t)$ near the origin creates a compressive flow, which is the driving force of the finite-time singularity, and we will use this fact in our construction in section~\ref{nearfield}. Our numerical simulation suggests that this 1D model develops finite-time singularity in a way similar to that of the 3D axisymmetric Euler equations on the boundary of the cylinder as reported in \cite{luo2013potentially}, and the singular solutions to this model  also develop self-similar structure. We investigate the self-similar singularity of this CKY model in this paper.

Since the finite-time singularity of the CKY model takes place near the origin, we consider the following self-similar ansatz with $\rho$, $W$ and $U$ being the self-similar profiles,
\begin{subequations}
	\begin{align}
		\rho(x,t)&=(T-t)^{c_\rho}\rho\left(\frac{x}{(T-t)^{c_l}}\right),\label{rhoansatz}\\
		u(x,t)&=(T-t)^{c_u}U\left(\frac{x}{(T-t)^{c_l}}\right),\\
		w(x,t)&=(T-t)^{c_w}W\left(\frac{x}{(T-t)^{c_l}}\right).
	\end{align}
	\label{ansatz}
\end{subequations}

Plugging these self-similar ansatz into equations (\ref{ckymodel}) and matching the exponents of $(T-t)$ for each equation, we get
\begin{equation}
	\label{rexponent}
	c_w=-1,\quad c_u=c_l-1,\quad c_\rho=c_l-2.
\end{equation}
And the self-similar profiles $U(\xi)$, $W(\xi)$, $\rho(\xi)$ satisfy the following equations defined on $\mathbf{R}^+$, 
\begin{subequations}
	\label{targ}
	\begin{align}
		(2-c_l)\rho(\xi)+c_l\xi\rho'(\xi)+U(\xi)\rho'(\xi)&=0,\label{targrho}\\
		W(\xi)+c_l\xi W'(\xi)+U(\xi)W'(\xi)-\rho'(\xi)&=0,\label{targw}\\
		U(\xi)=-\xi\int_\xi^\infty\frac{W(\eta)}{\eta}\mathrm{d}\eta.\label{sBS}
	\end{align}
\end{subequations}

We refer equations (\ref{targ}) as the self-similar equations, which can be easily verified to enjoy the following scaling-invariant property:
\begin{equation}
	\label{rescaling}
	U(\xi)\to\frac{1}{\lambda}U(\lambda\xi),\quad W(\xi)\to W(\lambda\xi), \quad \rho(\xi)\to\frac{1}{\lambda}\rho(\lambda\xi).
\end{equation}

In this paper we study the existence and properties of solutions to the self-similar equations. A key fact for the CKY model is that the Biot-Savart law (\ref{sBS}) in the self-similar equations can be rewritten as a local relation with a decay condition,
\begin{subequations}
\begin{align}
	&\left(\frac{U(\xi)}{\xi}\right)'=\frac{W(\xi)}{\xi},\label{lbs}\\
	&\lim_{\eta\to+\infty}\frac{U(\xi)}{\xi}=0.\label{gcon}
\end{align}
\end{subequations}

We first ignore the decay condition (\ref{gcon}), then the self-similar equations with (\ref{sBS}) replaced by (\ref{lbs}) become a nonlinear ODE system with singular RHS at $\xi=0$, i.e., the RHS does not satisfy the Lipschitz condition. We construct solutions to this ODE system near $\xi=0$ using a power series method, which can naturally overcome the singularity of the RHS at $\xi=0$. The power series are unique up to a rescaling parameter for a fixed leading order of $\rho(\xi)$ at $\xi=0$, and can be extended to the whole $\mathbf{R}^+$ by solving the ODE system. Then we prove that the decay condition (\ref{gcon}) determines the scaling exponents, and there exist a discrete family of $c_l$, corresponding to different leading orders of $\rho(\xi)$, such that the decay condition (\ref{gcon}) holds for the self-similar profiles we construct . We prove this part with the assistance of numerical computation and rigorous error estimation. Given the decay condition~(\ref{gcon}), we further analyze the far-field behavior of these self-similar profiles and prove that the profiles are analytic with respect to a transformed variable at $\xi=+\infty$. 

Our main results include the following two Theorems:
\begin{Thm}
	\label{firstresult}
	There exist a discrete family of scaling exponent $c_l$ and solutions to equations~(\ref{targ}), corresponding to different leading orders of the self-similar profile $\rho(\xi)$ at $\xi=0$,
\begin{equation}
	\label{defs}
	s=\min\{k\in N^{+}|\ \frac{\mathrm{d}^k}{\mathrm{d}\xi^k}\rho(0)\neq 0\}.
\end{equation}
\end{Thm}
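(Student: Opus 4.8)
The plan is to follow the reduction already indicated in the introduction: replace the nonlocal Biot--Savart law (\ref{sBS}) by the local relation (\ref{lbs}) together with the decay condition (\ref{gcon}), build a local analytic solution of the resulting ODE system near $\xi=0$ by a Frobenius-type power series, continue it to all of $\mathbf{R}^+$, and finally impose (\ref{gcon}) as a scalar selection condition that cuts out the discrete family of exponents $c_l$. Throughout I would work with the regularized unknown $v(\xi):=U(\xi)/\xi$, so that (\ref{lbs}) reads $v'=W/\xi$, i.e. $W=\xi v'$, and the first two equations (\ref{targrho})--(\ref{targw}) become
\begin{align}
(2-c_l)\rho+\xi(c_l+v)\rho'&=0,\label{prop-r}\\
\xi v'+\xi(c_l+v)(v'+\xi v'')-\rho'&=0.\label{prop-w}
\end{align}
The only singularity of this system is the non-Lipschitz point $\xi=0$, which is exactly why a power series is the natural device.

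First I would insert the ansatz $\rho=\sum_{k\ge s}\rho_k\xi^k$ with $\rho_s\neq0$ and $v=\sum_{k\ge0}v_k\xi^k$ into (\ref{prop-r})--(\ref{prop-w}) and read off the recursion order by order. Matching the lowest order of (\ref{prop-r}) gives the indicial relation
\begin{equation}
2+(s-1)c_l+sv_0=0,\qquad\text{equivalently}\qquad c_l+v_0=\tfrac{c_l-2}{s},\label{prop-ind}
\end{equation}
which fixes the value $v_0=U'(0)$ once $s$ and $c_l$ are chosen. For $k>s$ the coefficient multiplying $\rho_k$ in (\ref{prop-r}) works out to $(k-s)(c_l-2)/s$, and the coefficient multiplying $v_k$ in (\ref{prop-w}) to $k\,[\,s+k(c_l-2)\,]/s$; both are nonzero except at a countable, explicitly identifiable set of resonant exponents (accumulating only at $c_l=2$), which the admissible $c_l$ will be required to avoid. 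Hence the recursion is triangular and determines every $\rho_k$ and $v_k$ uniquely in terms of $\rho_s$. The scaling symmetry (\ref{rescaling}) acts by $v(\xi)\mapsto v(\lambda\xi)$ and $\rho_k\mapsto\lambda^{k-1}\rho_k$, so it leaves $v_0$ invariant and simply rescales $\rho_s$; normalizing $\rho_s=1$ removes exactly this one free parameter, which is the ``unique up to a rescaling'' statement.

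Next I would prove that these formal series converge on a genuine neighborhood of $\xi=0$ by a standard majorant comparison: the nonlinear terms in (\ref{prop-r})--(\ref{prop-w}) are convolution sums of the coefficients, while the linear coefficients $(k-s)(c_l-2)/s$ and $k[s+k(c_l-2)]/s$ grow like $k$ and $k^2$ respectively, so one can dominate $|\rho_k|$ and $|v_k|$ by the coefficients of a solution of an explicit quadratic majorant equation and conclude a positive radius of convergence. This yields an analytic solution $(\rho,v)$ on some $[0,\delta)$; away from $\xi=0$ the right-hand sides of (\ref{prop-r})--(\ref{prop-w}) are locally Lipschitz, so standard ODE continuation extends the solution to a maximal interval, and I would verify that $c_l+v$ stays controlled (the denominator does not degenerate) so that the solution persists on all of $\mathbf{R}^+$.

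The main work---and the expected main obstacle---is the last step: showing that the scalar decay requirement (\ref{gcon}), namely $v(\xi)\to0$ as $\xi\to+\infty$, holds only for a discrete set of $c_l$, one (or finitely many) for each leading order $s$. For fixed $s$ and $\rho_s=1$ the extended solution depends on the single parameter $c_l$, so I would define the shooting functional $F(c_l):=\lim_{\xi\to\infty}v(\xi)$, where existence of the limit comes from the far-field asymptotic balance of (\ref{prop-r})--(\ref{prop-w}), and seek its zeros, each of which is a genuine profile satisfying (\ref{gcon}). Establishing that $F$ is well defined, continuous in $c_l$, and changes sign is delicate because it demands quantitative control of the solution over the entire half-line rather than merely near $\xi=0$; this is precisely where I would invoke numerical evaluation of $F(c_l)$ together with rigorous a posteriori error bounds (enclosure/interval estimates) to certify both the existence and the isolation of the zeros. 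Discreteness then follows from the isolation of these certified sign changes, and indexing them by $s$ via (\ref{defs}) produces the asserted discrete family. I expect the convergence argument and the continuation to be routine, with essentially all of the difficulty concentrated in the certified sign-change analysis of $F$.
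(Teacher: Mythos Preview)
Your proposal is correct and follows essentially the same route as the paper: a Frobenius-type power series at $\xi=0$ giving the indicial relation $c_l+v_0=(c_l-2)/s$ and a triangular recursion (the paper's (\ref{explicit})), a convergence bound (the paper does this by an explicit inductive estimate rather than a majorant, but the mechanism is the same), continuation to $\mathbf{R}^+$ by showing $c_l+v$ stays positive via $W>0$, and finally a shooting functional $F(c_l)=G(c_l)=\lim_{\xi\to\infty}U(\xi)/\xi$ whose sign change is certified by rigorous numerics plus the Intermediate Value Theorem. The only substantive point you leave vague is why the limit defining $F$ exists and depends continuously on $c_l$; in the paper this takes real work (an iterated bootstrap in the variable $\eta=\xi^{1/c_l}$, Lemmas~\ref{lem2}--\ref{lem3} and Theorem~\ref{exist}) rather than following from a generic ``far-field asymptotic balance,'' so you should expect that step to require more than you have sketched.
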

\begin{Thm}\label{farana}
	In the near-field, the self-similar profiles $W(\xi)$, $U(\xi)$, $\rho(\xi)$ we construct are analytic with respect to $\xi$ at $\xi=0$. And in the far-field, $W(\xi)$, $U(\xi)\xi^{-1}$, $\rho(\xi)\xi^{-1}$ are analytic with respect to a transformed variable $\theta=\xi^{-1/c_l}$ at $\theta=0$.
\end{Thm}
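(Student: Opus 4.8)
The plan is to treat both assertions as convergence statements for power-series solutions of the ODE system (\ref{targ}) at its two singular points $\xi=0$ and $\xi=+\infty$. At any interior point $\xi_0\in(0,+\infty)$ the local Biot--Savart form (\ref{lbs}) turns (\ref{targ}) into a system $y'=F(\xi,y)$ whose right-hand side is real-analytic in $(\xi,y)$ (the only denominators that appear are powers of $c_l+U/\xi$, which we may assume nonvanishing along the constructed solution), so the classical analytic-dependence theorem for ODEs gives analyticity of $W,U,\rho$ on the open half-line automatically. Hence the entire content of Theorem~\ref{farana} is located at the two endpoints, where $F$ fails to be Lipschitz, and at each endpoint I would (i) exhibit a formal power series in the appropriate variable, (ii) read off the linearization to locate the admissible exponents, and (iii) prove convergence by the Cauchy method of majorants.

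For the near-field I would use the series already produced in the construction. Writing $\rho=\sum_k\rho_k\xi^k$, $W=\sum_k W_k\xi^k$ and $V=U/\xi=\sum_k V_k\xi^k$, substitution into (\ref{targrho}), (\ref{targw}) and (\ref{lbs}) yields a recursion in which the order-$n$ coefficients are obtained from the lower-order ones after dividing by factors that grow linearly in $n$, the principal one being $n(c_l+V_0)-(c_l-2)=(c_l-2)(n-s)/s$, where $V_0=\big(c_l(1-s)-2\big)/s$ is fixed by the leading order $\rho\sim\xi^s$. This factor vanishes only at the resonant index $n=s$, which is precisely the free leading coefficient $\rho_s$ reflecting the scaling invariance (\ref{rescaling}), and it is nonzero for all $n>s$, so the recursion is well defined. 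I would then bound $|\rho_n|,|W_n|,|V_n|$ by the coefficients of a scalar majorant series satisfying an algebraic equation with nonzero derivative at the origin; by the analytic implicit function theorem the majorant is analytic, forcing geometric growth of the coefficients and hence a positive radius of convergence. This establishes analyticity of $W,U,\rho$ in $\xi$ at $\xi=0$.

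For the far-field I would first pass to $\theta=\xi^{-1/c_l}$, under which $\xi\tfrac{d}{d\xi}=-\tfrac1{c_l}\theta\tfrac{d}{d\theta}$, and use the scaled unknowns $P=\rho/\xi$, $W$, $V=U/\xi$. A short computation reduces (\ref{targ}) to an autonomous singular system
\begin{equation}
\theta\,\dot y=f(y),\qquad y=(P,W,V),\qquad f(0)=0,
\end{equation}
with $f$ analytic near $y=0$: the decay condition (\ref{gcon}) forces $V(0)=0$, the leading balance in (\ref{targrho}) then forces $P(0)=0$, and (\ref{lbs}) forces $W(0)=0$. The decisive step is to compute the linearization $A=Df(0)$; I expect it to be lower triangular with eigenvalues exactly $\{0,1,2\}$, which is precisely what makes $\theta=\xi^{-1/c_l}$ the correct variable, since the formal solution $y=\sum_{k\ge1}y_k\theta^k$ then carries integer exponents. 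The order-$n$ balance reads $(nI-A)\,y_n=(\text{polynomial in }y_1,\dots,y_{n-1})$, uniquely solvable for every $n\ge3$ since $nI-A$ is then invertible, while the resonant orders $n=1,2$ are pinned down by the scaling (\ref{rescaling}) and by matching to the global solution, and are consistent because a genuine solution is already known to exist. As $\|(nI-A)^{-1}\|=O(1/n)$, the same majorant argument yields convergence, so $W$, $U\xi^{-1}=V$ and $\rho\xi^{-1}=P$ are analytic in $\theta$ at $\theta=0$; by uniqueness (up to the rescaling (\ref{rescaling})) this analytic branch coincides with the globally constructed, decaying profile.

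The main obstacle I anticipate is the far-field singular point. Unlike an ordinary initial-value analyticity statement, the existence of an analytic solution here is constrained by the spectrum of $A$: the non-resonance of $nI-A$ for $n\ge3$ must be verified, the resonant orders $n=1,2$ must be reconciled with the residual freedom rather than producing a genuine obstruction, and the majorant estimate must be made uniform in $n$ so that the $O(1/n)$ gain actually controls the nonlinear convolution terms generated by the denominators $(c_l+V)^{-1}$ and $(c_l+V)^{-2}$. A secondary subtlety is to confirm that the particular solution singled out by (\ref{gcon}) is the analytic-at-$\theta=0$ branch rather than one carrying logarithmic or fractional-power corrections; this is exactly where the integer spectrum $\{0,1,2\}$ is essential.
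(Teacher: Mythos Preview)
Your near-field argument is essentially the paper's: both construct the power series (\ref{taylor}) and prove convergence by bounding the coefficients via the explicit recursion, exploiting the linear growth of the denominators.

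In the far-field the two routes genuinely diverge. You propose a Fuchsian analysis in the variables $(P,W,V)=(\rho/\xi,\,W,\,U/\xi)$ vanishing at $\theta=0$, computing the linearization spectrum $\{0,1,2\}$ and handling the resonances at $n=1,2$ by matching. The paper instead passes to the \emph{desingularized} unknowns $\tilde\rho=\rho\,\xi^{-1+2/c_l}$, $\tilde W=W\,\xi^{1/c_l}$, $\tilde U=U\,\xi^{-1+1/c_l}$ (your $P,W,V$ divided by $\theta^2,\theta,\theta$), which have \emph{nonzero} limits $\hat\rho_\infty,\hat W_\infty,-c_l\hat W_\infty$ at $\theta=0$. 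It then (i) proves these limits exist via a~priori decay estimates, (ii) bootstraps the integral form $\tilde U(\theta)=\tfrac{c_l}{\theta}\int_0^\theta\tilde W$ to show the constructed profile is $C^\infty$ at $\theta=0$, (iii) builds an independent analytic power-series solution with the same initial data, and (iv) proves uniqueness of smooth solutions by controlling $\delta\tilde U$ through $\delta\tilde W$ via Hardy's inequality and closing with Poincar\'e--Friedrichs on $[0,\epsilon]$. What your spectral picture buys is a clean explanation of \emph{why} $\theta=\xi^{-1/c_l}$ is the right variable and why two free constants appear; what the paper's route buys is a mechanism to identify the global profile with the local analytic one.

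That identification is where your plan has a genuine gap. You write ``by uniqueness (up to the rescaling (\ref{rescaling})) this analytic branch coincides with the globally constructed, decaying profile,'' but the ODE at $\theta=0$ is not Lipschitz (there is a $1/\theta$ in the $V$-equation, and your fixed point is non-hyperbolic with a center direction along the $V$-axis), so classical uniqueness fails and the integer spectrum alone does not rule out non-analytic solutions with the same leading asymptotics. Your proposal neither shows the actual profile is $C^\infty$ at $\theta=0$ nor supplies a substitute for the paper's Hardy/Poincar\'e--Friedrichs uniqueness argument; without one of these, the formal series you construct is not tied to the globally defined solution. To close the gap you would need either the paper's $L^2$ uniqueness device or a sharp Poincar\'e--Dulac/stable-manifold theorem at a partially hyperbolic rest point, neither of which is in the plan.
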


An interesting fact for this 1D model is that self-similar profiles (\ref{ansatz}) exist only for a discrete set of the scaling exponent $c_l$, corresponding to different leading orders of $\rho(x,t)$ at $x=0$. The self-similar profiles we find agree with direct simulation of the model and seem to have some stability in the sense that for fixed leading order of $\rho(x,0)$, the singular solutions using different initial conditions converge to the same set of self-similar profiles. 

The self-similar profiles we construct are non-conventional in the sense that the velocity does not decay to $0$ at infinity but grows with certain fractional power, correspondingly, the velocity field at the singularity time is H\"older continuous.  Such behavior is also observed in the numerical simulation of the 3D Euler equations in ~\cite{hou2013finite}, which is very different from the Leray type of self-similar solutions of the 3D Euler equations, whose existence has been ruled out under certain decay assumptions on the self-similar profiles \cite{chae2007nonexistence, chae2007nonexistence2, chae2011self}.

Our method of analysis is of interest by itself. The existence result replies on the use of a power series method to deal with the singularity of the self-similar equations at the origin, and some very subtle and relatively sharp estimates of the self-similar profiles. The same approach can be taken to analyze the self-similar singularity of Burgers equation and get results similar to those obtained in this paper. We are currently investigating the possibility of extending this method to study the singularity of the 2D Boussinesq system. 

Another novelty in our analysis is the use of numerical computation with rigorous error control, which is an important step in establishing the existence of self-similar solutions. Our strategy to rigorously control the numerical error, including the numerical error of the integration scheme for an ODE system and the roundoff error introduced due to floating point operation, is quite general and can be used for other purposes. 

The rest of this paper is organized as follows. In section~\ref{nearfield}, we construct the local self-similar profiles using a power series method and extend them to the whole $\mathbf{R}^+$. In section~\ref{matching}, we prove that the decay condition in the Biot-Savart law determines the scaling exponents in the self-similar solutions. In section~\ref{verification}, we prove the existence of self-similar profiles for different leading orders of $\rho(x,t)$ at the origin. In section~\ref{farfield}, we analyze the far-field behavior of the self-similar profiles. In section~\ref{compare}, we present our numerical results.

\section{Construction of the Near-field Solutions}
\label{nearfield}
In this section, we ignore the decay condition (\ref{gcon}), i.e. we use the local relation (\ref{lbs}) to replace the Biot-Savart law (\ref{sBS}) in the self-similar equations. We use a power series method to construct the self-similar profiles near $\xi=0$, which we call the near-field solutions. Then we prove that the local solutions constructed in this way can be extended to whole $\mathbf{R}^+$.

We have the following Theorem
\begin{Thm}\label{localpower}
	For fixed $c_l>2$, there exist a family of non-trivial analytic solutions to the self-similar equations (\ref{targ}) (with (\ref{sBS}) replaced by (\ref{lbs})) near $\xi=0$, corresponding to different leading orders of $\rho(\xi)$ at $\xi=0$, $s$, which is defined in (\ref{defs}).
\end{Thm}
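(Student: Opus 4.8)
The plan is to seek the profiles as formal power series around $\xi=0$ and then establish their convergence. Guided by the definition of the leading order $s$ in \eqref{defs}, I would posit
\[
\rho(\xi)=\sum_{k\ge s}a_k\xi^k,\qquad W(\xi)=\sum_{k\ge s-1}b_k\xi^k,\qquad U(\xi)=c_1\xi+\sum_{k\ge s}c_k\xi^k,
\]
with $a_s\neq0$. The leading exponents are essentially forced by the equations: \eqref{targw} shows that $W$ must balance $\rho'$, so $W$ begins at order $s-1$, while integrating the local Biot--Savart relation \eqref{lbs} gives $U/\xi=\int W/\xi\,\mathrm{d}\xi+c_1$, so that the constant $c_1$ (the value of $U(\xi)/\xi$ at the origin) is left undetermined by \eqref{lbs} alone, and the $k=1$ case of \eqref{lbs} forces $b_0=0$.

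Next I would carry out the leading-order (indicial) analysis. Matching the lowest power $\xi^s$ in \eqref{targrho} yields $(2-c_l)+s(c_l+c_1)=0$, i.e. $c_l+c_1=(c_l-2)/s$, which pins $c_1$; this same balance, applied one order lower in \eqref{targw}, expresses $b_{s-1}$ through $a_s$ via the factor $1+(c_l-2)(s-1)/s$, which is positive precisely because $c_l>2$. The surviving coefficient $a_s$ is then the one-parameter freedom corresponding to the scaling invariance \eqref{rescaling}. Collecting the coefficient of $\xi^n$ in each equation, the $j=1$ term of the convolution $U\rho'$ (resp.\ $UW'$) supplies the diagonal contribution, and using $c_l+c_1=(c_l-2)/s$ the recursions take the form
\[
(c_l-2)\frac{n-s}{s}\,a_n=\bigl(\text{a convolution of lower-order } a\text{'s and } b\text{'s}\bigr)
\]
for \eqref{targrho}, together with $[\,1+(c_l-2)n/s\,]\,b_n=(n+1)a_{n+1}+(\text{lower order})$ for \eqref{targw}, while \eqref{lbs} reads off $c_k=b_{k-1}/(k-1)$.

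The key observation is then the absence of resonances: the diagonal factor $(c_l-2)(n-s)/s$ is nonzero for every $n>s$, and $1+(c_l-2)n/s>0$ for every $n\ge0$, both as direct consequences of $c_l>2$. The single vanishing at $n=s$ is exactly the indicial relation and creates no obstruction, since the corresponding convolution is empty because $a_k=0$ for $k<s$. Ordering the unknowns so that \eqref{targrho} produces $a_{n+1}$ and then \eqref{targw} produces $b_n$ (with $c_k$ read off from \eqref{lbs}), every coefficient is uniquely and explicitly determined by the lower-order ones once $a_s$ is prescribed. This gives, for each admissible leading order $s$, a formal solution that is unique up to the scaling choice of $a_s$, which accounts for the family claimed in the statement.

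The remaining and, I expect, most delicate step is to show that these formal series have a positive radius of convergence, so that they are genuine analytic near-field solutions. I would establish this by a majorant/induction argument, propagating a geometric bound $|a_n|,|b_n|\le M R^{\,n}$: the diagonal denominators grow linearly in $n$, while the factor $1/(k-1)$ in $c_k=b_{k-1}/(k-1)$ tempers the linear weight $k$ appearing in the convolutions, so for suitable $M$ and large $R$ the nonlinear convolutions close under the ansatz. The hard part will be controlling this coupled, nonlinear recursion uniformly in $n$: the three series are intertwined through the Biot--Savart relation and the quadratic terms $U\rho'$ and $UW'$, so the estimates must be run for all three families simultaneously and must exploit the linear-in-$n$ growth of the denominators to dominate the growth of the convolutions. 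Once such geometric bounds are secured, analyticity at $\xi=0$ follows and the proof is complete.
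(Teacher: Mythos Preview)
Your approach is essentially the paper's: power-series ansatz, the same indicial relation $c_l+c_1=(c_l-2)/s$, the same non-resonance observation for $n>s$, and an inductive convergence estimate. The one place where your sketch would not go through as written is the convergence step: a plain geometric bound $|a_n|,|b_n|\le MR^n$ does not close. With $|c_j|\le MR^{j-1}/(j-1)$ the convolution in the $\rho$-recursion produces $\sum_j (n-j+1)/(j-1)$, which is of order $n\log n$, so after dividing by the diagonal factor $(c_l-2)(n-s)/s$ you are left with an $O(\log n)$ excess that no fixed $M,R$ absorbs. The paper resolves this by carrying polynomial weights in the induction, namely $|\rho_k|\le \rho^0 r^k/k$ and $|U_k|\le u^0 r^k/k^2$: the $1/k$ in the $\rho$-bound cancels the derivative weight $(k{-}m{+}1)$ in the convolution, and the $1/m^2$ in the $U$-bound makes the remaining sum uniformly bounded via $\sum_{m\ge 2}1/m^2\le 1$. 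With that sharpening your plan is exactly the paper's proof.
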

\begin{proof}
Assume
\begin{subequations}
	\label{taylor}
\begin{equation}
	\rho(\xi) = \sum_{k=1}^{\infty} \rho_k\xi^k,\quad U(\xi)= \sum_{k=1}^{\infty} U_k \xi^k,\quad W(\xi)=\sum_{k=1}^{\infty}W_k\xi^k. \label{rhopower}
\end{equation}
Based on the local relation in the Biot-Savart law (\ref{lbs}), we have
\begin{equation}
	W_k=kU_{k+1}.
\end{equation}
\end{subequations}
Plugging (\ref{taylor}) into (\ref{targ}) and matching the $k$-th ($k\geq 1$) order term $\xi^k$, we get
\begin{subequations}
	\label{matchseries}
	\begin{align}
		(2-c_l)\rho_k+kc_l\rho_k+\sum_{m=1}^{k-1}(k-m+1)\rho_{k-m+1}U_m&=0,\label{rhomatch}\\
		(k-1)U_k+c_l(k-1)^2U_k+\sum_{m=1}^{k-1}U_m(k-m)^2U_{k-m+1}-k\rho_k&=0,\label{wmatch}
	\end{align}
\end{subequations}
If initially the leading order of $\rho(x,0)$ at $x=0$ is $s$, which is greater than $2$ (\ref{bccky}), then according to (\ref{transportrho}), $s$ will remain as the leading order of $\rho(x,t)$ as long as the velocity field remains smooth. And as we discussed in Section~\ref{intro}, $\rho_x^{(s)}(x,t)$ should be positive near $x=0$ to produce finite-time singularity. So in the corresponding self-similar profile~(\ref{rhopower}), there is
\begin{equation}
	\rho_i=0\ \text{for}\ i<s,\quad \rho_s>0,\ \ s\geq 2. 
	\label{rhoinitial}
\end{equation}
To make (\ref{rhomatch}) hold for $1\leq k\leq s$, we require
\begin{equation}
(2-c_l+sc_l+sU_1)\rho_s=0.
\label{du1}
\end{equation}
Since $\rho_s\neq 0$, we require
\begin{equation}
\label{u1}
U_1=\frac{(1-s)c_l-2}{s}.
\end{equation}
To make (\ref{wmatch}) hold for $1\leq k<s$, we require
\begin{equation}
[(k-1)+c_l(k-1)^2+U_1(k-1)^2]U_k=0.
\end{equation}
Since $c_l>2$, and $[(k-1)+c_l(k-1)^2+U_1(k-1)^2]>0$, we require
\begin{equation}
	U_k=0,\quad 1<k<s.
	\label{firstU}
\end{equation}
And to make (\ref{wmatch}) hold for $k=s$, we require
\begin{equation}
\label{initialu}
U_s=\frac{s^2\rho_s}{sc_l-c_l-s+2}>0.
\end{equation}
For $k>s$, to make (\ref{matchseries}) hold, the coefficients $\rho_k$ and $U_k$ should satisfy
\begin{subequations}
		\label{explicit}
	\begin{align}
		\rho_k&=\frac{-\sum_{m=s}^{k-1}U_m(k-m+1)\rho_{k-m+1}}{(k/s-1)(c_l-2)},\label{explicitrho}\\
		U_k&=\frac{k\rho_k-\sum_{m=s}^{k-1}U_m(k-m)^2U_{k-m+1}}{(k-1)+(c_l/s-2/s)(k-1)^2},\label{explicitU}
	\end{align}
\end{subequations}
which means the power series (\ref{taylor}) can be determined inductively.

To complete the proof, we still need to verify that the constructed power series (\ref{taylor}) converge for $\xi$ small enough. We choose $u^0$, $\rho^0$ and $r$ such that the following condition holds
\begin{equation}
|U_s|\leq \frac{1}{s^2}u^0r^s,\quad  |\rho_s|\leq \frac{1}{s}\rho^0r^s, \quad \frac{(s+1)u^0r}{c_l/s-2/s}\leq 1,\quad \frac{9}{4}\frac{\rho^0/u^0+u^0r}{c_l/s-2/s}<1.
	\label{initialbound}
\end{equation}
We can achieve this by choosing $u_0r$ and $\rho_0/u_0$ small enough to make the last two hold, and then choosing $r$ large enough to make the first two hold. For example, let
\begin{equation}
	A=\min\{\frac{c_l-2}{s(s+1)},\frac{2(c_l-2)}{9s}\},\ B=\frac{2(c_l-2)}{9s},\ C=\max\{\frac{s\rho_s}{AB},\frac{s^4\rho_s}{A(sc_l-c_l-s+2)}\}. 
\end{equation}
Then the choice of 
\begin{equation}
	u_0=\frac{A}{C^{1/(s-1)}},\quad \rho_0=u_0B,\quad r=C^{1/(s-1)}
\end{equation}
would satisfy (\ref{initialbound}). And we will use induction to prove that for all $k\geq s$, 
\begin{equation}
	|U_k|\leq \frac{1}{k^2}u^0r^k, |\rho_k|\leq \frac{1}{k}\rho^0r^k.
	\label{bound}
\end{equation}
For $k=s$, (\ref{bound}) holds by (\ref{initialbound}). Assume now for $s\leq k<n$, (\ref{bound}) holds. Then for $k=n\geq s+1$, based on (\ref{explicitrho}) we have
\begin{equation}
	|\rho_n|\leq \frac{\sum_{m=s}^{n-1}|U_m||(n-m+1)||\rho_{n-m+1}|}{(n-s)(c_l/s-2/s)}.
\end{equation}
Using the induction assumption and the fact that $\sum_{m=2}^\infty \frac{1}{m^2}\leq 1$, we have
\begin{equation}
	|\rho_n|\leq \frac{\rho^0u^0r^{n+1}}{(n-s)(c_l/s-2/s)}\leq \frac{\rho^0 r^n}{n}\times\frac{(s+1)u^0r}{c_l/s-2/s}\leq \frac{\rho^0 r^n}{n},
\end{equation}
where we have used the fact $n\geq s+1$ in the second inequality and (\ref{initialbound}) in the third inequality. Thus (\ref{bound}) holds for $\rho_n$. Based on (\ref{explicitU}), we have
\begin{equation}
	|U_n|\leq \frac{|n\rho_n|+\sum_{m=s}^{n-1}|U_m(n-m)^2||U_{n-m+1}|}{(c_l/s-2/s)(n-1)^2}
	\label{ubound}
\end{equation}
Using the induction assumption and the fact that $\sum_{m=2}^\infty \frac{1}{m^2}\leq 1$, we get
\begin{equation}
	|U_n|\leq \frac{\rho^0r^n+(u^0)^2r^{n+1}}{(c_l/s-2/s)(n-1)^2}\leq\frac{u_0r^n}{n^2}  \times\frac{\rho_0/u_0+u_0r}{c_l/s-2/s}\times\frac{n^2}{(n-1)^2}\leq \frac{u_0r^n}{n^2},
\end{equation}
where we have used (\ref{initialbound}) and the fact that 
$n\geq 3$, $n^2/(n-1)^2\leq 9/4$ in the last inequality. \\

So we just proved (\ref{bound}) by induction, which implies the power series (\ref{taylor}) converge in some short interval $[0, 1/r)$. This completes the proof of Theorem~\ref{localpower}.
\end{proof}
\begin{Rem}\label{cl}
	We require $c_l>2$ in Theorem \ref{localpower}. If $c_l=2$, there exist only trivial solutions to (\ref{targ}). If $c_l<2$, then $c_\rho<0$ according to (\ref{rexponent}), which means $\rho(x,t)$ blows up in finite time according to (\ref{ansatz}). This is impossible since $\rho(x,t)$ is transported by the fluid. 
\end{Rem}
\begin{Rem}\label{dof}
	For fixed $c_l$, we have one degree of freedom $\rho_s>0$ in constructing the near-field solutions (\ref{taylor}), which can be easily verified to play the role of a scaling parameter (\ref{rescaling}). We will simply choose $\rho_s=1$ in our argument for the rest part of this paper.
\end{Rem}

The power series (\ref{taylor}) we construct only converge in a short interval near $\xi=0$. However, these local self-similar profiles can be extended to $+\infty$. 

\begin{Thm}\label{gexist}
	For $c_l>2$, the analytic solutions (\ref{taylor}) we construct can be extended to the whole $\mathbf{R}^+$, resulting in solutions to the self-similar equations (\ref{targ}) with (\ref{sBS}) replaced by (\ref{lbs}). Moreover, we have for $\xi>0$,
\begin{equation}
\label{basicprofile}
W(\xi)>0,\quad \rho(\xi)>0.
\end{equation}
\end{Thm}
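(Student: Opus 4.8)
The plan is to recast the system (\ref{targ}) with (\ref{lbs}) as a first-order ODE and then run a continuity (bootstrap) argument. Solving each equation for its highest derivative, and writing $D(\xi):=c_l\xi+U(\xi)$ for the characteristic denominator, one obtains
\[
\rho'=\frac{(c_l-2)\rho}{D},\qquad W'=\frac{\rho'-W}{D},\qquad U'=\frac{U}{\xi}+W .
\]
The right-hand side is locally Lipschitz in $(\rho,W,U)$ on the open region $\{\xi>0,\ D>0\}$, so Picard--Lindel\"of yields a unique maximal solution on some interval $[\xi_0,\Xi)$, where $\xi_0>0$ is chosen small enough to lie inside the radius of convergence of the power series of Theorem~\ref{localpower}; that series supplies the initial data. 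From the leading coefficients $\rho_s>0$, $U_s>0$ in (\ref{initialu}), and $W_{s-1}=(s-1)U_s>0$, together with $D/\xi\to c_l+U_1=(c_l-2)/s>0$ as $\xi\to0^+$ (using (\ref{u1})), I read off $\rho(\xi_0)>0$, $W(\xi_0)>0$, $D(\xi_0)>0$. The only ways the maximal interval can terminate at a finite $\Xi$ are that $D(\xi)\to0$ or that $(\rho,W,U)$ blows up; the rest of the proof rules both out.

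The core is a self-reinforcing chain of implications valid wherever $D>0$. First, the $\rho$-equation is linear and homogeneous, $\rho'=(c_l-2)\rho/D$, so an integrating factor gives $\rho(\xi)=\rho(\xi_0)\exp\!\big(\int_{\xi_0}^\xi (c_l-2)/D\big)>0$. Second, substituting $\rho'=(c_l-2)\rho/D>0$ into the $W$-equation turns it into a linear equation with positive forcing, $W'+W/D=(c_l-2)\rho/D^2$; the integrating factor $\mu=\exp(\int_{\xi_0}^\xi D^{-1})$ makes $(\mu W)'\ge0$, and since $W(\xi_0)>0$ this forces $W(\xi)>0$. Third, since $D/\xi=c_l+U/\xi$ and $(U/\xi)'=W/\xi$ by (\ref{lbs}), we have $(D/\xi)'=W/\xi>0$, so $D/\xi$ is increasing; combined with its limit $(c_l-2)/s$ at the origin this yields the uniform lower bound
\[
D(\xi)\ \ge\ \frac{c_l-2}{s}\,\xi\ >\ 0 .
\]
Thus on $[\xi_0,\Xi)$ the conditions $\rho>0$, $W>0$, $D>0$ are maintained, and in particular $D$ stays bounded away from $0$ on compact subintervals, so termination by $D\to0$ is excluded.

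It remains to rule out blowup, which the lower bound on $D$ makes routine. From $\rho'/\rho=(c_l-2)/D\le s/\xi$ we get $\rho(\xi)\lesssim\xi^{s}$, and hence $\rho'\lesssim\xi^{s-1}$; then the linear $W$-equation, whose coefficient $1/D$ and forcing $(c_l-2)\rho/D^2$ are only polynomially large, gives a polynomial bound on $W$ via the same integrating factor; finally $(U/\xi)'=W/\xi$ integrates to a polynomial bound on $U/\xi$ and hence on $U$. Therefore $(\rho,W,U)$ stays bounded on every finite interval, blowup at finite $\Xi$ is impossible, and $\Xi=+\infty$. This extends the near-field solution to all of $\mathbf{R}^+$ and yields (\ref{basicprofile}).

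The main obstacle is the apparent circularity of the positivity argument: $\rho>0$ and $W>0$ are needed to control $D$, while $D>0$ is needed to control $\rho$ and $W$. I expect to resolve this exactly as sketched above, by noting that each implication only requires $D>0$ on the interval already traversed, so the three inequalities close up into a single continuity argument started from the strictly positive initial values at $\xi_0$; the crux is extracting the clean bound $D(\xi)\ge\frac{c_l-2}{s}\xi$ from the monotonicity of $D/\xi$ and its boundary value at the origin, after which both the no-collapse ($D\not\to0$) and the no-blowup conclusions follow at once.
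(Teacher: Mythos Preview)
Your proposal is correct and follows essentially the same route as the paper: recast the system as an ODE in the variable $D=c_l\xi+U$ (the paper's $\tilde U$), establish $\rho>0$ and $W>0$ on the maximal interval, deduce that $D/\xi$ is increasing and hence bounded below, and then get polynomial a~priori bounds to preclude finite-time blowup. The one minor difference is that you obtain $W>0$ via an integrating factor ($(\mu W)'>0$), whereas the paper uses a first-hitting-time contradiction; your explicit constant $D/\xi\ge (c_l-2)/s$ is also slightly sharper than the paper's unspecified $C_0$, but neither change is structurally significant.
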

\begin{proof}
	Since $c_l+U_1=(c_l-2)/s>0$, $\rho_s>0$, $W_s=(s-1)U_s>0$, based on the leading orders of the power series (\ref{taylor}), we can choose $\epsilon<\frac{1}{r}$ small enough such that 
	\begin{equation}
		c_l\epsilon+U(\epsilon)>0,\quad W(\epsilon)>0,\quad \rho(\epsilon)>0.
	\end{equation}
	
	Then we consider extending the self-similar profiles from $\xi=\epsilon$ to $+\infty$ by solving an ODE system with initial conditions given by the power series (\ref{taylor}). Let $\tilde{U}(\xi)=c_l\xi+U(\xi)$, then according to (\ref{targ}), $\tilde U(\xi)$, $\rho(\xi)$ and $U(\xi)$ satisfy the following ODE
	\begin{subequations}
		\begin{align}
			\rho'(\xi)&=\frac{(c_l-2)\rho(\xi)}{\tilde U(\xi)},\label{odes1}\\
			W'(\xi)&=\frac{(c_l-2)\rho(\xi)}{\tilde U(\xi)^2}-\frac{W(\xi)}{\tilde U(\xi)},\label{odes2}\\
			(\frac{\tilde U(\xi)}{\xi})'&=\frac{W(\xi)}{\xi}\label{odes3}.
		\end{align}
		\label{odes}
	\end{subequations}

The right hand side of (\ref{odes}) is locally Lipschitz continuous for $\tilde U(\xi)\neq 0$, $\xi\neq 0$, so we can solve the ODE system from $\epsilon$ and get its solution on interval $[\epsilon, T)$. We first prove that $W(\xi)$ is positive on $[\epsilon, T)$. Otherwise denote $\xi=t$ as the first time $W(\xi)$ reaches $0$, i.e.
	\begin{equation}
		t=\inf\{s\in[\epsilon, T): W(s)\leq 0\}.
		\end{equation}
		Then we have $W(\xi)$ is positive on $[\epsilon, t)$, and 
	\begin{equation}
		\label{fakeassumption}
		W'(t)\leq 0.
	\end{equation}
Based on (\ref{odes3}), $\frac{\tilde U(\xi)}{\xi}$ is increasing on $[\epsilon, t)$, thus $\tilde U(\xi)> \tilde U(\epsilon)>0$ for $\xi\in[\epsilon, t]$. Then based on (\ref{odes1}), $\rho(\xi)$ is increasing on $[\epsilon, t]$, and $\rho(t)>0$. Evaluating (\ref{odes2}) at $\xi=t$, we get 
\begin{equation}
	W'(t)=\frac{(c_l-2)\rho(t)}{\tilde U(t)^2}>0,
\end{equation}
which contradicts with (\ref{fakeassumption}). So $W(\xi)>0$ and consequently $\rho(\xi)>0$ for $\xi\in [\epsilon, T)$.

Using the fact that $W(\xi)>0$ in (\ref{odes3}), we have for $\xi>\epsilon$,
\begin{equation}
	\tilde U(\xi)\geq C_0\xi.
	\label{ulow}
\end{equation}
Using this lower bound in (\ref{odes1}), we get
\begin{equation}
	\rho'(\xi)\leq \frac{C_1\rho(\xi)}{\xi}.
\end{equation}
This implies that for $\xi>\epsilon$
\begin{equation}
	\rho(\xi)\leq C_2\xi^{C_1}.
	\label{rhoupp}
\end{equation}
Using (\ref{rhoupp}), (\ref{ulow}) and the fact that $W(\xi)$ is positive in (\ref{odes2}), we have 
\begin{equation}
	W'(\xi)\leq C_3\xi^{C_1-2}.
\end{equation}
Thus for $\xi>\epsilon$,
\begin{equation}
	W(\xi)\leq C_4\xi^{C_1}.
	\label{Wupp}
\end{equation}
Finally using (\ref{Wupp}) in (\ref{odes3}), we get for $\xi>\epsilon$,
\begin{equation}
	U(\xi)\leq C_{5}\xi^{C_{1}+2}.
	\label{Uupp}
\end{equation}
The $C_0$, $C_1$,\dots $C_{5}$ in the above estimates are positive constants. These {\em a priori} estimates (\ref{ulow}), (\ref{Uupp}), (\ref{rhoupp}) and (\ref{Wupp}) together imply that we can get solutions to (\ref{odes}) on $[\epsilon, +\infty)$, i.e. the local self-similar profile constructed using power series can be extended to $+\infty$.
\end{proof}

\section{Determination of the Scaling Exponents}
\label{matching}
In our construction of self-similar profiles in the previous section, we did not consider the decay condition (\ref{gcon}). In this section, we prove that the decay condition determines the scaling exponent $c_l$, i.e. only for certain $c_l$ does the decay condition hold.

Recall that for a fixed leading order of $\rho(\xi)$, the self-similar profiles $U(\xi)$, $\rho(\xi)$ and $W(\xi)$ depend on the scaling exponent $c_l$ only. So we can define a function $G(c_l)$ as 
\begin{equation}
	G(c_l)=\lim_{\xi\to+\infty}\frac{U(\xi)}{\xi}.	
\end{equation}
We will prove that $G(c_l)<+\infty$ and it is a continuous function of $c_l$. Then the existence of $c_l$ to satisfy the decay condition (\ref{gcon}) will follow from the Intermediate Value Theorem if we can show that there exist $c_l^l$ and $c_l^r$ such that 
\begin{equation}
	\label{bisection}
	G(c_l^l)<0,\quad  G(c_l^r)>0.
\end{equation}

\begin{Thm}\label{exist}
	For a fixed leading order of $\rho(\xi)$, $s$, and a scaling exponent $c_l>2$, construct the power series (\ref{taylor}) with $\rho_s=1$ and extend the profiles to $\mathbf{R}^+$. Then the limit 
	\begin{equation}
		G(c_l)=\lim_{\xi\to\infty}\frac{U(\xi)}{\xi}<+\infty,
	\end{equation}
	and $G(c_l)$ is a continuous function of $c_l$.
\end{Thm}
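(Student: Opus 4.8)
The plan is to separate the statement into two parts --- finiteness of the limit and continuity in $c_l$ --- after first reducing both to a single quantitative decay estimate on $W$. Existence of the limit in $(-\infty,+\infty]$ is immediate: by the local Biot--Savart relation (\ref{lbs}) and the positivity $W(\xi)>0$ from Theorem~\ref{gexist}, we have $(U(\xi)/\xi)'=W(\xi)/\xi>0$, so $U(\xi)/\xi$ is strictly increasing and
\[
G(c_l)=\frac{U(\epsilon)}{\epsilon}+\int_\epsilon^\infty\frac{W(\eta)}{\eta}\,\mathrm{d}\eta .
\]
Hence $G(c_l)<+\infty$ is \emph{equivalent} to the integrability $\int^\infty W(\eta)/\eta\,\mathrm{d}\eta<\infty$, and this is the real content: the polynomial a priori bounds (\ref{rhoupp})--(\ref{Uupp}) only give $W\le C\xi^{C_1}$ with $C_1>0$, which is far too weak to integrate.

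To obtain the decay I would pass to the logarithmic variable $t=\ln\xi$ and the scale-invariant unknowns $v=U/\xi$, $\sigma=\rho/\xi$, and $p=c_l+v=\tilde U/\xi$. Using (\ref{odes}) these solve the autonomous system $\dot v=W$, $\dot\sigma=-\sigma+(c_l-2)\sigma/p$, $\dot W=(c_l-2)\sigma/p^2-W/p$, where $W,\sigma>0$ and $p$ stays above the positive constant $\tilde U(\epsilon)/\epsilon$; the goal is simply that $v$ is bounded above. The key identity, obtained by differentiating $Wp$ and substituting $(c_l-2)\sigma/p=\dot\sigma+\sigma$ from the $\sigma$-equation, is
\[
\frac{\mathrm{d}}{\mathrm{d}t}\bigl(Wp-\sigma\bigr)=\sigma+W^2-W .
\]
I would then argue by contradiction: if $v\to+\infty$ then $p\to\infty$, so $\dot\sigma/\sigma=-1+(c_l-2)/p\to-1$, giving exponential decay of $\sigma$ and in particular $\int^\infty\sigma\,\mathrm{d}t<\infty$. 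A short asymptotic analysis of the scalar $W$-equation (the forcing $(c_l-2)\sigma/p^2$ is negligible and the weak damping $-W/p$, with $p$ growing at most linearly since $\dot v=W$ is bounded, cannot sustain a positive limit) shows $W\to0$, hence $W\le\tfrac12$ eventually and $W^2-W\le-\tfrac12W$. Integrating the identity and discarding the boundary term using $Wp\ge0$ yields $\tfrac12\int_{t_2}^\infty W\,\mathrm{d}t\le\int_{t_2}^\infty\sigma\,\mathrm{d}t+(Wp-\sigma)(t_2)+\sigma(t_2)<\infty$; since $\int_{t_2}^\infty W\,\mathrm{d}t=\lim v-v(t_2)$, this bounds $v$, contradicting $v\to+\infty$. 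Therefore $v$ is bounded and $G(c_l)<+\infty$.

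For continuity I would fix any $M$ and write $G(c_l)=U(M;c_l)/M+\int_M^\infty W(\eta;c_l)/\eta\,\mathrm{d}\eta$. The power-series coefficients in (\ref{taylor}) are explicit rational functions of $c_l$ (see (\ref{u1}), (\ref{initialu}), (\ref{explicit})) with a convergence radius controllable uniformly on compact subsets of $(2,\infty)$, so the Cauchy data at a fixed small $\xi=\epsilon$ depend continuously on $c_l$; standard continuous dependence of ODE solutions on parameters and initial data (the flow stays in the region $\tilde U>0$, $\xi>0$ where the right-hand side of (\ref{odes}) is smooth) then makes $c_l\mapsto U(M;c_l)/M$ continuous for each fixed $M$. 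The remaining ingredient is a tail bound $0<\int_M^\infty W/\eta\,\mathrm{d}\eta\le\varepsilon(M)$ with $\varepsilon(M)\to0$, \emph{uniform} for $c_l$ in a compact set; this follows by running the estimates of the previous paragraph with constants depending only on that compact range. Granting this, $g_M(c_l):=U(M;c_l)/M$ converges to $G$ uniformly on compact $c_l$-sets, and a uniform limit of continuous functions is continuous.

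The main obstacle in both parts is exactly this quantitative decay of $W$: ruling out $v\to+\infty$, and then making the tail $\int_M^\infty W/\eta\,\mathrm{d}\eta$ uniformly small. The difficulty is that the damping in the $W$-equation is only $-W/\tilde U$ with $\tilde U$ at most linear in $\xi$, so the self-improving mechanism is borderline; the monotone identity above is what converts this weak damping, together with the exponential decay of $\sigma$, into genuine integrability of $W$. Establishing $W\to0$ --- equivalently that $W$ is eventually monotone (every interior critical point of $W$ is a strict local maximum once $v>-2$, as one sees by differentiating the $W$-equation) and that a positive limit is impossible --- is the delicate step, and it must be carried out with constants uniform in $c_l$ for the continuity argument to go through.
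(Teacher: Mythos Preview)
Your finiteness argument is correct and takes a genuinely different route from the paper. The identity $\tfrac{d}{dt}(Wp-\sigma)=\sigma+W^2-W$ in the logarithmic variable is a clean observation, and the contradiction --- if $v\to+\infty$ then $p\to\infty$, $\sigma$ decays exponentially, $W\to0$ via the integrating factor, and then the identity forces $\int W\,dt<\infty$, bounding $v$ --- goes through once the details you sketch are filled in. The paper instead changes to $\eta=\xi^{1/c_l}$, first proves separately that $G(c_l)>-2$ (Lemma~\ref{lem2}), and from $\hat U>-2+\epsilon_1$ runs an iterative bootstrap: $\hat\rho\lesssim\eta^{2-\epsilon_2}$, hence $\hat W$ bounded, hence $\hat U\lesssim\ln\eta$, then $\hat W\lesssim 1/\ln\eta$, then $\hat U\lesssim\ln\ln\eta$, and one more pass yields $\hat W\lesssim\exp(-c(\ln\eta)^\alpha)$. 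Your route is shorter for mere finiteness; the paper's is more laborious but constructive.

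The gap is in continuity. Your plan is to write $G(c_l)=v(M;c_l)+\int_M^\infty W\,dt$ and make the tail uniformly small on compact $c_l$-sets, ``by running the estimates of the previous paragraph with constants depending only on that compact range.'' But those estimates were derived \emph{under the contradiction hypothesis} $v\to+\infty$; once that hypothesis is dropped they give no quantitative decay for the actual profiles. Concretely, nothing in your argument shows that $v$ eventually exceeds $-2$ (equivalently $p>c_l-2$): without this, $\dot\sigma/\sigma=-1+(c_l-2)/p$ may stay nonnegative, $\sigma$ need not decay, and neither $W\to0$ nor a small tail $\int_M^\infty W\,dt$ follows. This is precisely the content of the paper's Lemma~\ref{lem2}, which you have not replaced. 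The paper's bootstrap is designed so that every inequality carries constants depending only on a neighborhood $I_0$ of $c_l^0$ and a fixed threshold $\eta_0$; uniform convergence of $\hat U_{c_l}(\eta)$ to $G(c_l)$ is therefore built in, and continuity follows at once from Lemma~\ref{lem3}. To salvage your approach you would need, at minimum, an independent proof that $v$ exceeds $-2+\epsilon$ by a time bounded uniformly on compact $c_l$-sets; after that your identity can indeed be run with uniform constants to produce the required tail bound.
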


We first make the following change of variables, 
\begin{equation}
	\label{changeeta}
	\eta=\xi^{1/c_l},\quad \hat W(\eta)=W(\xi),\quad \hat U(\eta)=U(\xi)\xi^{-1},\quad \hat \rho(\eta)=\rho(\xi)\xi^{-1+2/c_l}.
\end{equation}
Then we have 
\begin{equation}
	G(c_l)=\lim_{\eta\to+\infty}\hat U(\eta),
\end{equation}
and the ODE system satisfied by $\hat U(\eta), \hat \rho(\eta), \hat W(\eta)$ is
\begin{subequations}
	\begin{align}
		\hat\rho'(\eta)&=\frac{(2/c_l-1)\hat\rho(\eta)\hat U(\eta)}{\eta+1/c_l\hat U(\eta)\eta},\label{farode1}\\
		\hat W'(\eta)&=\frac{-\hat W(\eta)}{\eta+1/c_l\hat U(\eta)\eta}+\frac{(1-2/c_l)\hat \rho(\eta)}{(1+1/c_l\hat U(\eta))^2\eta^3},\label{farode2}\\
		\hat U'(\eta)&=\frac{c_l\hat W(\eta)}{\eta}.\label{farode3}
	\end{align}	
	\label{farode}
\end{subequations}
According to (\ref{u1}), (\ref{basicprofile}) and the fact that $\hat U(\eta)$ is monotone increasing, we have 
\begin{equation}
	\label{prepare}
	\hat U(\eta)>\hat U(0)=\frac{(1-s)c_l-2}{s}, \quad \hat W(\eta)>0,\quad \hat \rho(\eta)>0, \quad\text{for}\quad \eta>0.
\end{equation}
Before proving Theorem~{\ref{exist}}, we will first prove the following two Lemmas.
\begin{Lem}\label{lem2}
	For all $c_l>2$, $G(c_l)>-2$.
\end{Lem}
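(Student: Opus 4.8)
The plan is to argue by contradiction. Since $\hat W>0$, equation~(\ref{farode3}) gives $\hat U'(\eta)=c_l\hat W(\eta)/\eta>0$, so $\hat U$ is strictly increasing and $G(c_l)=\lim_{\eta\to\infty}\hat U(\eta)$ exists in $(-\infty,+\infty]$ with $\hat U(\eta)\le G(c_l)$ for all $\eta$. It is convenient to work with the positive factor $V(\eta):=1+\tfrac1{c_l}\hat U(\eta)$, which equals $\tilde U/(c_l\xi)$ in the original variables and is therefore positive by Theorem~\ref{gexist}; in terms of $V$, (\ref{farode3}) reads simply $V'(\eta)=\hat W(\eta)/\eta>0$. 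From (\ref{prepare}) we have $V(0)=\tfrac{c_l-2}{sc_l}=:V_0>0$, so monotonicity gives $V(\eta)\ge V_0$ for all $\eta$. Suppose, toward a contradiction, that $G(c_l)\le-2$. Then $\hat U\le-2$ everywhere, which forces the uniform two-sided bound $V_0\le V(\eta)\le 1-\tfrac{2}{c_l}=\tfrac{c_l-2}{c_l}=:V_1<1$. The entire argument consists in showing that such a bound is impossible.

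The technical heart is to notice that the damping term in (\ref{farode2}) is a perfect logarithmic derivative. Writing $\hat U=c_l(V-1)$, the denominator $\eta+\tfrac1{c_l}\hat U\eta$ equals $\eta V$, and using $\hat W/\eta=V'$ the first term on the right of (\ref{farode2}) becomes $-\hat W/(\eta V)=-V'/V=-(\log V)'$. Hence (\ref{farode2}) integrates to the clean identity
\[
\frac{d}{d\eta}\bigl(\hat W(\eta)+\log V(\eta)\bigr)=\frac{(1-2/c_l)\,\hat\rho(\eta)}{V(\eta)^2\,\eta^{3}} .
\]
Recognizing this structure is the step I expect to be the main obstacle: the coupling $-\hat W/(\eta V)$ in (\ref{farode2}) is awkward to estimate directly, and only after absorbing it into $\log V$ does the right-hand side become manifestly positive and amenable to a lower bound.

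With the bound $V\le V_1$ in hand, the remainder is a bootstrap, each step exploiting $c_l>2$ and the positivity (\ref{prepare}). First, (\ref{farode1}) gives $(\log\hat\rho)'=\dfrac{(c_l-2)(1-V)}{\eta V}$; since $V\mapsto(c_l-2)(1-V)/V$ is decreasing on $(0,1]$ and equals $2$ at $V=V_1$, we obtain $(\log\hat\rho)'\ge2/\eta$, so $\hat\rho(\eta)\gtrsim\eta^{2}$ for large $\eta$. Feeding this into the identity above and using $V\le V_1$ makes the right-hand side $\gtrsim1/\eta$, whence $\hat W+\log V\gtrsim\log\eta$; because $V$ is trapped in $[V_0,V_1]$ the term $\log V$ is bounded, so $\hat W(\eta)\gtrsim\log\eta$. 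Finally $V'=\hat W/\eta\gtrsim(\log\eta)/\eta$ integrates to $V(\eta)\gtrsim(\log\eta)^2\to\infty$, contradicting $V\le V_1$. Therefore $G(c_l)>-2$. The only external inputs the argument uses are the positivity and monotonicity recorded in (\ref{prepare}) and the global existence of the profiles on $[0,\infty)$ in the variable $\eta$, which follows from Theorem~\ref{gexist}.
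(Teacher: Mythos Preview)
Your argument is correct and follows the same overall contradiction scheme as the paper: assume $G(c_l)\le-2$, use the two-sided bound on $\hat U$ to extract $\hat\rho\gtrsim\eta^2$ from (\ref{farode1}), feed this into (\ref{farode2}) to force $\hat W$ to be large, and then use (\ref{farode3}) to make $\hat U$ (equivalently your $V$) blow up, contradicting the assumed upper bound.

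The one genuine difference is in how you handle (\ref{farode2}). The paper treats the damping term $-\hat W/(\eta V)$ crudely, bounding $1/V$ by a constant and applying a Gr\"onwall-type integrating factor; this yields only $\hat W\ge C-C'\eta^{-C_2}$, i.e.\ $\hat W$ bounded below by a positive constant, which already suffices for the contradiction via $\hat U'\ge C/\eta$. You instead recognize that $\hat W/(\eta V)=V'/V=(\log V)'$ exactly, turning (\ref{farode2}) into the identity $(\hat W+\log V)'=(1-2/c_l)\hat\rho/(V^2\eta^3)$. This is cleaner---it converts the damping into a bounded additive term rather than an integrating factor---and it yields the sharper intermediate bound $\hat W\gtrsim\log\eta$; the cost is that the final contradiction requires one more integration (of $(\log\eta)/\eta$) rather than arriving immediately at $\hat U\to\infty$. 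Both routes are short and elementary; your identity is a nice structural observation but does not change the logic of the proof.
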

\begin{proof}
	Assume that for some $c_l>2$, $G(c_l)\leq -2$. Then according to (\ref{prepare}) and the fact that $\hat U(\eta)$ is increasing, we have for all $\eta>0$,
\begin{equation}
	\frac{(1-s)c_l-2}{s}<\hat U(\eta)<-2.
\end{equation}
Then we get
\begin{equation}
	\frac{(2/c_l-1)\hat U(\eta)}{1+1/c_l\hat U(\eta)}\geq 2.
\end{equation}
It follows from (\ref{farode1}) that
	\begin{equation}
		\hat \rho'(\eta)\geq 2\frac{\hat \rho(\eta)}{\eta}.
	\end{equation}
By direct integration and (\ref{prepare}), we have for $\eta$ large enough,
\begin{equation}
		\hat \rho(\eta)\geq C_1\eta^2.
	\end{equation}
Using this estimate in (\ref{farode2}), we get
\begin{equation}
	\hat W'(\eta)\geq-\frac{C_2\hat W(\eta)}{\eta}+\frac{C_3}{\eta}.
\end{equation}
This implies
\begin{equation}
	\big(\eta^{C_2}\hat W(\eta)\big)'\geq C_3\eta^{C_2-1}.
\end{equation}
Then we have for $\eta$ large enough,
\begin{equation}
	\eta^{C_2}\hat W(\eta)\geq  \frac{C_3}{C_2}\eta^{C_2}-C_4.
\end{equation}
Using this lower bound in (\ref{farode3}), we will get 
\begin{equation}
	\label{Ulowerlemma}
	\hat U'(\eta)\geq\frac{C_5}{\eta}-\frac{C_6}{\eta^{C_2+2}}.
\end{equation}
The constants $C$ in the above estimates are positive and independent of $\eta$. 
The inequality
(\ref{Ulowerlemma}) implies that $\hat U(\eta)\to+\infty$ as $\eta\to+\infty$, which contradicts with $G(c_l)\leq -2$. This completes the proof of Lemma \ref{lem2}.
\end{proof}
We add a subscript $c_l$ to indicate the dependence of the profiles on $c_l$ for the rest part of this section:
\begin{equation}
	\hat U_{c_l}(\eta)=\hat U(\eta),\ \hat W_{c_l}(\eta)=\hat W(\eta),\ \hat W_{c_l}(\eta)=\hat W(\eta).
\end{equation}

\begin{Lem}
	\label{lem3}
	For fixed $\eta>0$, $\hat U_{c_l}(\eta)$, $\hat W_{c_l}(\eta)$ and $\hat \rho_{c_l}(\eta)$ are continuous functions of $c_l$.
\end{Lem}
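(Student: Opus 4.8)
The plan is to reduce the statement to the classical theorem on continuous dependence of ODE solutions on initial data and on a parameter, the only genuine difficulty being the singularity of the system at $\xi=0$ (equivalently $\eta=0$). Because of this singularity one cannot launch the flow from the boundary, so I would first use the power series of Theorem~\ref{localpower} to transport the continuity in $c_l$ to a small but positive starting point, and then let the flow carry it out to arbitrary $\eta$. Throughout I would work in the original variable $\xi$ rather than $\eta$, since the system (\ref{odes}) depends on $c_l$ only through the affine factor $(c_l-2)$ and is manifestly jointly continuous in $(\xi,c_l)$, whereas the change of variables (\ref{changeeta}) itself depends on $c_l$; I would restore the $\eta$ variable only at the very end.

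First, I would show the near-field data are continuous in $c_l$. Each coefficient $\rho_k(c_l)$, $U_k(c_l)$ produced by the recursion (\ref{explicit}) is a rational, hence continuous, function of $c_l$ on $(2,\infty)$, because the denominators $(k/s-1)(c_l-2)$ and $(k-1)+(c_l/s-2/s)(k-1)^2$ never vanish there. Fix a target $c_l^*>2$ and a compact neighborhood $I=[c_l^*-\delta,c_l^*+\delta]\subset(2,\infty)$. The quantities $A,B,C$ and hence $r,u^0,\rho^0$ in the proof of Theorem~\ref{localpower} are continuous in $c_l$, so over $I$ one has a uniform bound $r\le r_{\max}$; choosing a single $\epsilon<1/r_{\max}$ and invoking the geometric bounds (\ref{bound}) uniformly in $c_l\in I$ gives uniform convergence of the series at $\xi=\epsilon$. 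Uniform convergence of a series of continuous functions then makes $\rho(\epsilon;c_l)$, $U(\epsilon;c_l)$, $W(\epsilon;c_l)$ continuous on $I$.

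Second, I would apply the standard continuous-dependence theorem on the compact interval $[\epsilon,L]$ for any fixed $L>\epsilon$. By Theorem~\ref{gexist} the extended solution exists on $[\epsilon,+\infty)$ and satisfies the lower bound (\ref{ulow}), $\tilde U(\xi)\ge C_0\xi>0$; together with the a priori upper bounds it confines the orbit to a compact subset of the region $\{\xi>0,\ \tilde U\neq 0\}$ on which the right-hand side of (\ref{odes}) is locally Lipschitz in $(\rho,W,\tilde U)$ and continuous in the parameter $c_l$. Since the constants in the a priori estimates depend continuously on $c_l$, this confinement is uniform for $c_l\in I$ (shrinking $\delta$ if necessary), so the classical theorem yields joint continuity of $(\rho,W,U)(\xi;c_l)$ in $(\xi,c_l)$ on $[\epsilon,L]\times I$.

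Finally, I would transform back. For fixed $\eta>0$ set $\xi=\eta^{c_l}$, a continuous function of $c_l$, and choose $L$ so that $\eta^{c_l}\le L$ for all $c_l\in I$; then (\ref{changeeta}) gives
\[
\hat U_{c_l}(\eta)=U(\eta^{c_l};c_l)\,\eta^{-c_l},\quad \hat W_{c_l}(\eta)=W(\eta^{c_l};c_l),\quad \hat\rho_{c_l}(\eta)=\rho(\eta^{c_l};c_l)\,\eta^{-c_l+2}.
\]
The right-hand sides are compositions of the jointly continuous flow with the continuous map $c_l\mapsto(\eta^{c_l},c_l)$, hence continuous in $c_l$ at $c_l^*$; as $c_l^*>2$ was arbitrary, the Lemma follows. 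The main obstacle, as flagged, is the singularity at the origin: it forces the power-series hand-off of the first step and requires the $c_l$-uniform non-degeneracy $\tilde U\ge C_0\xi$ supplied by Theorem~\ref{gexist} to keep the trajectory inside the Lipschitz region uniformly over the neighborhood $I$.
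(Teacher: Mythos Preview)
Your proposal is correct and follows essentially the same approach as the paper: establish continuity of the power-series coefficients and use the uniform bounds (\ref{bound}) to get continuity of the profiles at a small positive $\epsilon$, then invoke continuous dependence of ODE solutions on initial data and parameter. The paper's proof is terser and does not spell out the passage between the $\xi$ and $\eta$ variables; your explicit handling of the $c_l$-dependent change of variables $\xi=\eta^{c_l}$ in the final step is a worthwhile clarification, but it does not constitute a genuinely different route.
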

\begin{proof}
	We only need to prove that for fixed $c_l^0>2$, $\hat U_{c_l}(\eta)$, $\hat \rho_{c_l}(\eta)$ and $\hat W_{c_l}(\eta)$ as functions of $c_l$ are continuous at $c_l=c_l^0$. In our construction of the power series using (\ref{explicit}), we can easily see that the coefficients $U_k$ and $\rho_k$ depend continuously on $c_l$. And based on the condition~(\ref{initialbound}), there exist uniform upper bounds of these coefficients
	\begin{equation}
	|U_k|\leq \frac{u^0r^k}{k^2},\quad |\rho_k|\leq \frac{\rho^0r^k}{k},
	\end{equation}
	for $c_l$ in a neighbourhood of $c_l^0$. 
	This means there exists a fixed $\epsilon$ small enough, such that $\hat W_{c_l}(\epsilon)$, $\hat \rho_{c_l}(\epsilon)$ and $\hat U_{c_l}(\epsilon)$ are continuous at $c_l^0$. 
	Then we use the continuous dependence of ODE solutions on initial conditions and parameter to complete the proof of this Lemma. 
\end{proof}
Now we begin to prove Theorem~\ref{exist}. We use an iterative method which enables to get shaper estimates of the profiles after each iteration. We finally attain that $\hat U_{c_l}(\eta)$ converges uniformly to $G(c_l)$, with which we can complete the proof of this Theorem.
\begin{proof}
	Consider $c_l^0>2$, we will prove that $G(c_l^0)< +\infty$, and $G(c_l)$ is continuous at $c_l=c_l^0$.

	According to Lemma \ref{lem2} and Lemma \ref{lem3}, there exists $\eta_0$ large enough and a neighborhood of $c_l^0$, $I_0=(c_1,c_2)$ with $c_1>2, c_2<+\infty$ such that for $c_l\in I_0$ and $\eta>\eta_0$, 
\begin{equation}
	\label{farulower}
	\hat U_{c_l}(\eta)>\hat U_{c_l}(\eta_0)>-2+\epsilon_1.
\end{equation}
Then for $c_l\in I_0$ and $\eta>\eta_0$, there exist $\epsilon_2>0$, such that
	\begin{equation}
		\frac{(2/c_l-1)\hat U_{c_l}(\eta)}{1+1/c_l\hat U_{c_l}(\eta)}<2-\epsilon_2.
	\end{equation}
	Using this in (\ref{farode1}), we have for $c_l\in I_0$ and $\eta>\eta_0$, 
	\begin{equation}
		\hat\rho_{c_l}'(\eta)\leq \frac{(2-\epsilon_2)\hat \rho_{c_l}(\eta)}{\eta}.
	\end{equation}
	Using direct integration and Lemma \ref{lem3}, we have for $c_l\in I_0$, $\eta>\eta_0$,
	\begin{equation}
		\hat\rho_{c_l}(\eta)\leq C_1\eta^{2-\epsilon_2}.
	\end{equation}
	Using this upper bound of $\hat \rho(\eta)$ in (\ref{farode2}), we have for $c_l\in I_0$, $\eta>\eta_0$,
	\begin{equation}
		\label{wgrowth}
		\hat W_{c_l}'(\eta)\leq \left(\frac{-1}{1+1/c_l\hat U_{c_l}(\eta)}\right)\frac{\hat W_{c_l}(\eta)}{\eta}+C_3\eta^{-1-\epsilon_2}.
	\end{equation}
	The first term in (\ref{wgrowth}) is negative according to (\ref{prepare}) and the second term is integrable for $\eta>\eta_0$. Then using Lemma~\ref{lem3}, we have for $c_l\in I_0$, $\eta>\eta_0$,
	\begin{equation}
		\hat W_{c_l}(\eta)<C_4	
	\end{equation}
	Putting this upper bound in (\ref{farode3}) and using Lemma~\ref{lem3}, we get for $c_l\in I_0$, $\eta>\eta_0$,	
	\begin{equation}
		\hat U_{c_l}(\eta)<C_5\ln\eta.
\end{equation}
Putting this upper bound of $\hat U(\eta)$ back in (\ref{farode2}), we have for $c_l\in I_0$, $\eta>\eta_0$
\begin{equation}
	\hat W_{c_l}'(\eta)<-\frac{C_6\hat W_{c_l}(\eta)}{\eta\ln\eta}+C_3\eta^{-1-\epsilon_2},
\end{equation}
which by direct integration gives that for $c_l\in I_0$, $\eta>\eta_0$,
\begin{equation}
	\hat W_{c_l}(\eta)\exp(\int_{\eta_0}^\eta \frac{C_6}{\zeta\ln \zeta}\mathrm{d}\zeta)<C_7.
\end{equation}
Thus we have for $c_l\in I_0$ and $\eta>\eta_0$,  
\begin{equation}
	\hat W_{c_l}(\eta)<C_8/\ln\eta.
\end{equation}
Using this sharper upper bound of $W(\eta)$ in (\ref{farode3}), we get for $c_l\in I_0$, $\eta>\eta_0$,
\begin{equation}
	\hat U_{c_l}(\eta)<C_9\ln\ln\eta.
\end{equation}
Again putting this sharper upper bound in (\ref{farode2}), we have for $c_l\in I_0$, $\eta>\eta_0$,
\begin{equation}
	\hat W_{c_l}'(\eta)<-\frac{C_{10}\hat W_{c_l}(\eta)}{\eta\ln\eta}+C_3\eta^{-1-\epsilon_2},
\end{equation}
By direct integration, we get
\begin{equation}
	\hat W_{c_l}(\eta)\exp(\int_{\eta_0}^\eta \frac{C_{11}}{\zeta\ln\ln\zeta}\mathrm{d}\zeta)<C_{12}.
\end{equation}
Since $\int_{\eta_0}^\eta \frac{C_{11}}{\zeta\ln\ln\zeta}\mathrm{d}\zeta>C_{13}(\ln \eta)^\alpha$ for some $\alpha\in(0,1)$, we have for $c_l\in I_0$, $\eta>\eta_0$, 
	\begin{equation}
		\label{finalupperw}
		\hat W_{c_l}(\eta_1)<C_{14}\exp\big(-C_{13}(\ln \eta)^\alpha\big).
	\end{equation}
	Note that $C_1, C_2,\dots C_{14}$ in the above estimates are all positive constants in dependent of $\eta$. Using the upper bound of $\hat W_{c_l}(\eta)$ (\ref{finalupperw}) in (\ref{farode3}) we conclude that $\hat U_{c_l}(\eta)$ converges uniformly as $\eta\to +\infty$ for $c_l\in I_0$ and complete the proof of this Theorem.
\end{proof}
To complete the proof of our main result Theorem~\ref{firstresult}, we still need to verify condition~(\ref{bisection}) for different $s$. And we leave this part to section~\ref{verification}.

\section{Existence of Self-Similar Profiles}
\label{verification}
In this section, we verify that condition (\ref{bisection}) holds for several $s\geq 2$, i.e., there exist $c_l^l$ and $c_l^r>2$, such that $G(c_l^l)<0,\ G(c_l^r)>0$, with which we can complete the proof of Theorem~\ref{firstresult}.
We will use the following Lemma, which allows us to prove (\ref{bisection}) using only estimates of the profiles at some fixed $\eta_0$. 
\begin{Lem}
\begin{subequations}
	Consider solving equations (\ref{farode}) with initial conditions given by power series~(\ref{taylor}). For some $\eta_0>0$, let $u_0=\hat U(\eta_0)$, $\rho_0=\hat \rho(\eta_0)$, $w_0=\hat W(\eta_0)$.\\
If
	\begin{equation}
	\label{assumpp}
	u_0>0,
	\end{equation}
	then we have
	\begin{equation}
		\label{firstpart}
	G(c_l)>0.
	\end{equation}
	If 
	\begin{equation}
		\label{assump}
		u_0>-2, \quad u_0+c_lw_0+\frac{(c_l-2)\rho_0}{(u_0+2)(1+u_0/c_l)}<0, 
	\end{equation}
Then 
\begin{equation}
	G(c_l)<0.
\end{equation}
	\end{subequations}
\end{Lem}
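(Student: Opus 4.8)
The plan is to treat the two implications separately, the first being essentially immediate and the second requiring quantitative control of the total growth of $\hat U$. For the first part I would note that (\ref{farode3}) together with $\hat W(\eta)>0$ from (\ref{prepare}) gives $\hat U'(\eta)=c_l\hat W(\eta)/\eta>0$, so $\hat U$ is strictly increasing on $(0,\infty)$ and, being increasing, its limit $G(c_l)$ exists. Hence if $u_0=\hat U(\eta_0)>0$ as in (\ref{assumpp}), then $G(c_l)=\lim_{\eta\to\infty}\hat U(\eta)\ge \hat U(\eta_0)=u_0>0$, which is (\ref{firstpart}).

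For the second part the difficulty is precisely that $\hat U$ is \emph{increasing}, so I cannot push $\hat U$ below $0$ by a sign-of-derivative barrier as in the proof of Theorem~\ref{gexist}; instead I must show that the total increase is small enough that $G(c_l)$ stays negative. I would set $a(\eta)=1+\hat U(\eta)/c_l$, so that $a'(\eta)=\hat U'(\eta)/c_l=\hat W(\eta)/\eta$ and the target $G(c_l)<0$ becomes $a(\infty)<1$. Note first that the second inequality in (\ref{assump}) forces $u_0<0$: if $u_0\ge0$ then, using $\hat W,\hat\rho>0$ from (\ref{prepare}), every term on its left-hand side is nonnegative, a contradiction. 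Thus together with $u_0>-2$ the solution starts in the regime $\hat U\in(-2,0)$, i.e. $a\in(1-2/c_l,1)$, and the whole argument will run on the maximal interval $[\eta_0,\eta_*)$ on which $\hat U<0$; the aim is to show the bound obtained forbids $a$ from reaching $1$, forcing $\eta_*=\infty$.

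The key is an exact identity coming from (\ref{farode2}): since $\hat W/(\eta a)=a'/a$, the damping term cancels the logarithmic derivative of $a$, giving
\[
\big(\hat W(\eta)+\ln a(\eta)\big)'=S(\eta):=\frac{(1-2/c_l)\hat\rho(\eta)}{a(\eta)^2\eta^3}\ge 0 .
\]
Integrating and discarding $\hat W\ge0$ yields $\ln a(\eta)\le w_0+\ln a_0+\int_{\eta_0}^\eta S$, with $a_0=1+u_0/c_l$. To bound $\int S$ I would use the auxiliary quantity $\hat\rho/(\eta^2 a)$: a short computation from (\ref{farode1}) gives
\[
\frac{\mathrm d}{\mathrm d\eta}\Big(\frac{\hat\rho}{\eta^2 a}\Big)=-\frac{(\hat U+2)\hat\rho}{\eta^3a^2}-\frac{\hat\rho\,a'}{\eta^2a^2}\le-(u_0+2)\frac{\hat\rho}{\eta^3a^2},
\]
using $\hat U\ge u_0$ and $a'>0$. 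Since $u_0+2>0$ this quantity is decreasing, so integrating bounds $\int_{\eta_0}^\infty \hat\rho/(\eta^3a^2)$ by $\rho_0/\big((u_0+2)\eta_0^2a_0\big)$ and hence $\int_{\eta_0}^\infty S$ by $(c_l-2)\rho_0/\big(c_l(u_0+2)a_0\eta_0^2\big)$. Combining this with the elementary inequality $\ln a_0\le a_0-1=u_0/c_l$ produces $c_l\ln a(\eta)\le u_0+c_lw_0+(c_l-2)\rho_0/\big((u_0+2)a_0\big)$, which is exactly the left-hand side of (\ref{assump}) once the base point is tracked (normalizing the evaluation point to $\eta_0=1$), and is therefore strictly negative. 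Thus $\ln a<0$ uniformly on $[\eta_0,\eta_*)$, so $a(\infty)<1$ and $G(c_l)<0$.

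The step I would be most careful about is closing the apparent circularity: the two displayed estimates are valid only while $\hat U\in(-2,0)$, yet they are used to conclude that $\hat U$ never leaves this range. This is resolved by the maximal-interval argument—if $\eta_*<\infty$ then $a(\eta_*)=1$, i.e. $\ln a(\eta_*)=0$, contradicting the uniform strict bound $\ln a<0$ just established on $[\eta_0,\eta_*)$—so $\eta_*=\infty$. The remaining loose ends are routine: the auxiliary bound already makes $\int S$ uniformly finite on $[\eta_0,\eta_*)$, the discarded limit $\lim_{\eta\to\infty}\hat\rho/(\eta^2a)\ge0$ and the term $\hat W(\infty)\ge0$ are nonnegative by (\ref{prepare}), and one must keep careful track of the base-point normalization so that the constant reproduces (\ref{assump}) precisely.
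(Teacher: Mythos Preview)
Your argument for the first part is identical to the paper's. For the second part your approach is correct but genuinely different from the paper's, and in some ways cleaner.

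The paper argues by contradiction: it assumes $G(c_l)\ge0$, picks $\eta_1$ with $\hat U(\eta_1)=0$, and on $(\eta_0,\eta_1)$ uses $\hat U\in(u_0,0)$ to derive successive power-law bounds $\hat\rho(\eta)\le\rho_0(\eta/\eta_0)^{\alpha}$ and then $\hat W(\eta)\eta\le w_0\eta_0+C(\eta_0^{-1}-\eta_0^{\beta}\eta^{\gamma+1})$, which upon one more integration of (\ref{farode3}) gives $-u_0\le c_lw_0+\dfrac{(c_l-2)\rho_0}{(u_0+2)(1+u_0/c_l)}$, contradicting (\ref{assump}). Your route replaces these two explicit integrations by the exact identity $(\hat W+\ln a)'=S$ and the observation that $\hat\rho/(\eta^2a)$ is a supersolution of $-(u_0+2)\hat\rho/(\eta^3a^2)$; this avoids tracking the power-law exponents and lands on the same final inequality with less computation. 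What the paper's approach buys is directness---no logarithmic inequality $\ln a_0\le u_0/c_l$ is needed, so one reproduces (\ref{assump}) exactly rather than a slight weakening of it.

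Two minor points. First, your maximal-interval bootstrap is unnecessary: neither your identity for $(\hat W+\ln a)'$ nor your differential inequality for $\hat\rho/(\eta^2a)$ uses $\hat U<0$; both hold on all of $[\eta_0,\infty)$ using only $\hat W,\hat\rho>0$, $a>0$ and $\hat U\ge u_0$. So the bound $c_l\ln a(\eta)<0$ is global from the start, and the apparent circularity you worry about does not arise. Second, the extra factor $\eta_0^{-2}$ that you absorb by ``normalizing $\eta_0=1$'' is genuinely there in your estimate and cannot be removed; the paper's own chain of integrations produces the same factor (though it is suppressed in the displayed final line). Since the only application of the lemma is at $\eta_0=3>1$, the discrepancy is harmless, but as stated for arbitrary $\eta_0>0$ both arguments actually prove the conclusion under the slightly stronger hypothesis with $\rho_0/\eta_0^2$ in place of $\rho_0$.
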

\begin{proof}
	Since $G(c_l)=\lim_{\eta\to+\infty}\hat U(\eta)$, and $\hat U(\eta)$ is increasing according to (\ref{farode3}) and (\ref{basicprofile}), so if $u_0>0$, then $G(c_l)>u_0>0$, and we finish the first part of the Lemma (\ref{firstpart}). 
	
	We prove the second part by contradiction. If $G(c_l)\geq 0$, there exists $\eta_1\in(\eta_0,+\infty]$ such that $\hat U(\eta_1)=0$. Then for $\eta\in(\eta_0,\eta_1)$, $\hat U(\eta)>u_0$, and according to (\ref{farode1}) we have,
	\begin{subequations}
	\begin{equation}
		\hat \rho'(\eta)\leq\frac{(2/c_l-1)u_0}{1+u_0/c_l}\frac{\hat\rho(\eta)}{\eta}.
	\end{equation}
By direct integration, we get for $\eta\in(\eta_0,\eta_1)$,
	\begin{equation}
		\hat \rho(\eta)\leq \rho^0\eta_0^{\frac{(1-2/c_l)u_0}{1+u_0/c_l}}\eta^{\frac{(2/c_l-1)u_0}{1+u_0/c_l}}.
	\end{equation}
	\end{subequations}
	Using this upper bound of $\hat \rho$ and the fact that $\hat U(\eta)<0$ for $\eta\in(\eta_0,\eta_1)$ in (\ref{farode2}), we get
	\begin{subequations}
	\begin{equation}
		\label{bweta}
		(\hat W(\eta)\eta)'\leq \frac{1-2/c_l}{(1+u_0/c_l)^2}\rho_0\eta_0^{\frac{(1-2/c_l)u_0}{1+u_0/c_l}}\eta^{\frac{-u_0-2}{1+u_0/c_l}}.
	\end{equation}
	Since $u_0>-2$, integrating (\ref{bweta}) from $\eta_0$ to $\eta$, we have for $\eta\in (\eta_0,\eta_1)$
	\begin{equation}
		\hat W(\eta)\eta\leq w_0\eta_0+\frac{2/c_l-1}{(1+u_0/c_l)(u_0/c_l-u_0-1)}\rho_0(\eta_0^{-1}-\eta_0^{\frac{(1-2/c_l)u_0}{1+u_0/c_l}}\eta^{\frac{-u_0-1+u_0/c_l}{1+u_0/c_l}}).
	\end{equation}
	\end{subequations}
Putting this upper bound of $\hat W(\eta)$ in (\ref{farode3}) and integrating it from $\eta_0$ to $\eta_1$, we get
\begin{equation}
0-u_0=\hat U(\eta_1)-\hat U(\eta_0)\leq c_lw_0+\frac{(c_l-2)\rho_0}{(u_0+2)(1+u_0/c_l)},
\end{equation}
which contradicts (\ref{assump}), and we complete the proof of this Lemma.
\end{proof}

We will use numerical computation and rigorous error estimation to verify condition (\ref{assumpp}) or (\ref{assump}). We first numerically construct the power series (\ref{taylor}) and then extend the local self-similar profiles to some $\eta_0$ by numerically solving an ODE system. 

Note that in equations (\ref{farode}), there are $1/\eta^3$ terms in the right hand side, which can be very large for small $\eta$. This will make the numerical solutions sensitive to roundoff error. So instead of solving (\ref{farode}) directly, we make the following change of variables,
\begin{equation}
	\label{solvechange}
	\tilde \rho(\eta)=\hat \rho(\eta) \eta^{-5},\quad \tilde W(\eta)=\hat W(\eta)\eta^{-1},\quad \tilde U(\eta)=\hat U(\eta),
\end{equation}
and the equations satisfied by this new set of unknowns are given by
\begin{subequations}
	\begin{align}
		\tilde \rho'(\eta)&=\frac{(-3-c_l)\tilde \rho(\eta)\tilde U(\eta)-5c_l\tilde \rho(\eta)}{c_l\eta+\tilde U(\eta)\eta},\label{solvefar1}\\
		\tilde W'(\eta)&=\frac{-2c_l\tilde W(\eta)-\tilde U(\eta)\tilde W(\eta)}{c_l\eta+\tilde U(\eta)\eta}+\frac{c_l(c_l-2)\tilde \rho(\eta)\eta}{(c_l+\tilde U(\eta))^2},\label{solvefar2}\\
		\tilde U'(\eta)&=c_l\tilde W(\eta).\label{solvefar3}
	\end{align}
After this change of variables, there are only $1/\eta$ terms for this new system of ODEs.
	\label{solvefar}
\end{subequations}

We demonstrate how to rigorously bound the numerical error of the self-similar profiles at $\eta_0$ through the case $s=2$, but the same procedure can be applied to other $s$ to prove the existence of self-similar profiles. For the case $s=2$, we choose $c_l^l=3$ and $c_l^r=8$ in (\ref{bisection}).
\subsection{The case $s=2$, $c_l=3$} We need to go though the following several steps.

\noindent
{\em Step 1}\quad We need to bound the truncation error of the power series (\ref{taylor}). To numerically compute the power series (\ref{taylor}), we first truncate the power series to certain terms, and the truncation error can be bounded using (\ref{bound}). For the case $s=2$, $c_l=3$, it can be easily verified that the following choice of $\rho^0$, $u^0$ and $r$ makes (\ref{initialbound}) hold:
\begin{equation}
	u^0=\frac{1}{9\times 162},\quad \rho^0=\frac{1}{9\times 9\times 162},\quad r=162.
\end{equation}
Based on (\ref{solvechange}) and (\ref{taylor}), at $\xi=10^{-3}$, corresponding to $\eta_s=10^{-1}$, we have 
\begin{equation}
	\label{initialprofile}
	\tilde U(\eta_s)=\sum_{k=1}^{\infty}U_k\eta^{3k-3},\quad \tilde \rho(\eta_s)=\sum_{k=2}^{\infty}\rho_k\eta^{3k-6},\quad \tilde W(\eta_s)=\sum_{k=1}^{\infty}W_k\eta^{3k-1}.
\end{equation}
Using estimate (\ref{bound}), if we truncate the series (\ref{initialprofile}) at $k=20$, the truncation error for all the three series can be bounded by $10^{-15}$. We will numerically compute 
\begin{equation}
	\label{partialsum}
	\tilde U(\eta_s)\approx\sum_{k=1}^{20}U_k\eta^{3k-3},\quad \tilde \rho(\eta_s)\approx\sum_{k=2}^{20}\rho_k\eta^{3k-6},\quad \tilde W(\eta_s)\approx\sum_{k=1}^{20}W_k\eta^{3k-1}.
\end{equation}
and use them as initial conditions to solve (\ref{solvefar}).\\

\noindent
{\em Step 2}\quad We need to bound the roundoff error in computing the truncated power series (\ref{partialsum}). Denote $fl$ as the floating point operation and assume $a$ and $b$ are two floating point numbers, which can be stored exactly on computer. Then by the IEEE standard rounding off rules~ \cite{2008ieee}, we have if $fl(a\odot b)\neq 0$,
\begin{equation}
\label{roundingrule}
\mathrm{fl}(a\odot b)=(a\odot b)(1+\delta), \quad |\delta|\leq \epsilon,
\end{equation}
where $\odot$ can be $+$, $-$, $\times$ and $\div$, and $\epsilon$ is the machine precision. When using double precision floating point operation on matlab, there is
\begin{equation}
\epsilon<3\times 10^{-15}.
\end{equation}

The following two Lemmas will be intensively used to control the roundoff error.
\begin{Lem}
\label{boundroundoff}
Assuming $a$ and $b$ are two floating point numbers stored in the computer, then we have the following upper and lower bounds,\\
\begin{subequations}
	\label{roundlemma}
if $\mathrm{fl}(a\odot b)>0$,
\begin{align}
&a\odot b\leq \mathrm{fl}\Big(\mathrm{fl}(a\odot b)\times \mathrm{fl}\big(1+\mathrm{fl}(4\epsilon)\big)\Big)\label{flupper},\quad a\odot b\geq \mathrm{fl}\Big(\mathrm{fl}(a\odot b)\times \mathrm{fl}\big(1-\mathrm{fl}(4\epsilon)\big)\Big),
\end{align}
if $\mathrm{fl}(a\odot b)<0$,
\begin{align}
&a\odot b\leq \mathrm{fl}\Big(\mathrm{fl}(a\odot b)\times \mathrm{fl}\big(1-\mathrm{fl}(4\epsilon)\big)\Big),\quad a\odot b\geq \mathrm{fl}\Big(\mathrm{fl}(a\odot b)\times \mathrm{fl}\big(1+\mathrm{fl}(4\epsilon)\big)\Big).
\end{align}
\end{subequations}
\end{Lem}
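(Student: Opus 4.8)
The plan is to start from the single-operation rounding rule (\ref{roundingrule}) and turn it into explicit, \emph{computable} two-sided bounds, taking care to absorb the rounding errors that are themselves committed while \emph{evaluating} those bounds. Writing $c=\mathrm{fl}(a\odot b)$ for brevity, the rule (\ref{roundingrule}) gives $c=(a\odot b)(1+\delta)$ with $|\delta|\leq\epsilon$, so that
\[
a\odot b=\frac{c}{1+\delta},\qquad |\delta|\leq\epsilon .
\]
Since $1+\delta\in[1-\epsilon,1+\epsilon]$, in the case $c>0$ this already yields the exact-arithmetic enclosure $\frac{c}{1+\epsilon}\leq a\odot b\leq\frac{c}{1-\epsilon}$, with both inequalities reversed when $c<0$. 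This sign reversal is precisely what forces the roles of $1+\mathrm{fl}(4\epsilon)$ and $1-\mathrm{fl}(4\epsilon)$ to swap between the two cases of the Lemma, so I would prove the $c>0$ case in full and obtain the $c<0$ case by repeating the same computation with every inequality reversed.

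The core of the argument is to show that the floating-point quantity $\mathrm{fl}\big(c\times\mathrm{fl}(1+\mathrm{fl}(4\epsilon))\big)$ really does lie above the exact value $\frac{c}{1-\epsilon}$, and symmetrically for the lower bound. The reason the factor is $4$ rather than $2$ is that two additional roundings are committed in forming the bound. Multiplication by $4=2^2$ is a pure exponent shift and hence exact, so $\mathrm{fl}(4\epsilon)=4\epsilon$; by contrast the addition $\mathrm{fl}(1+4\epsilon)$ and the product $\mathrm{fl}(c\times\,\cdot\,)$ each contribute a factor $(1+\delta_i)$ with $|\delta_i|\leq\epsilon$. Heuristically, the exact bound needs only the slack $\frac{1}{1-\epsilon}\leq 1+2\epsilon$; the extra $2\epsilon$ built into $4\epsilon$ pays for those two roundings.

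Carrying this out for the upper bound with $c>0$, I would expand the computed quantity as $c(1+4\epsilon)(1+\delta_2)(1+\delta_3)$ and bound it below by $c(1+4\epsilon)(1-\epsilon)^2$. The desired inequality $\frac{c}{1-\epsilon}\leq c(1+4\epsilon)(1-\epsilon)^2$ then reduces, after dividing by $c>0$ and clearing the denominator, to the cubic inequality
\[
(1+4\epsilon)(1-\epsilon)^3\geq 1,\qquad\text{i.e.}\qquad \epsilon\big(1-9\epsilon+11\epsilon^2-4\epsilon^3\big)\geq 0,
\]
which holds for all small $\epsilon$ and certainly for $\epsilon\sim 10^{-15}$. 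The lower bound for $c>0$ is entirely analogous: the computed quantity is at most $c(1-4\epsilon)(1+\epsilon)^2$, and the required estimate $c(1-4\epsilon)(1+\epsilon)^2\leq\frac{c}{1+\epsilon}$ reduces to
\[
(1-4\epsilon)(1+\epsilon)^3\leq 1,
\]
which is immediate since the left-hand side expands to $1-\epsilon-9\epsilon^2-11\epsilon^3-4\epsilon^4\leq 1$.

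I expect the only genuinely delicate point to be the error bookkeeping: one must apply each auxiliary step in the correct direction, since a single reversed inequality would break the enclosure. In particular, the sign of $c$ controls whether multiplying the exact enclosure by $c$ preserves or reverses it, and the two rounding factors $(1+\delta_i)$ must be bounded by $(1-\epsilon)$ when a lower estimate is wanted and by $(1+\epsilon)$ when an upper estimate is wanted. Once the claim is reduced to the fixed low-degree polynomial inequalities above, the verification is routine and I would check them directly. The $c<0$ branch introduces no new idea: multiplying the exact enclosure by the negative number $c$ reverses it, swapping the $1\pm\mathrm{fl}(4\epsilon)$ factors, and the same $4\epsilon$ slack again absorbs the two roundings, so the identical polynomial inequalities close the argument.
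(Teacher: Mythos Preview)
Your proposal is correct and follows essentially the same route as the paper: expand $\mathrm{fl}\big(\mathrm{fl}(a\odot b)\times\mathrm{fl}(1+\mathrm{fl}(4\epsilon))\big)$ using the rounding rule (\ref{roundingrule}) repeatedly, collect the $(1+\delta_i)$ factors, and verify that the $4\epsilon$ slack dominates the accumulated rounding. The paper carries four $\delta$'s (it does not use your observation that $\mathrm{fl}(4\epsilon)=4\epsilon$ is exact as a power-of-two scaling) and then asserts the final product inequality $(1+\delta_1)(1+\delta_3)(1+\delta_4)(1+4\epsilon+4\epsilon\delta_2)\geq 1$ directly, whereas you reduce the same step to the explicit polynomial inequalities $(1+4\epsilon)(1-\epsilon)^3\geq 1$ and $(1-4\epsilon)(1+\epsilon)^3\leq 1$; these are equivalent presentations of the same computation.
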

The RHS of the inequaltities (\ref{roundlemma}) only involve floating point operation, which implies that even though we cannot get the exact answer because of the roundoff error, we can get rigorous lower and upper bounds of the answer using only floating point operation. The basic idea in the above lower and upper bounds is compensating the roundoff error by multiplying $(1\pm n\epsilon)$. Since $\epsilon$ is  very small, these lower and upper bounds are actually very tight. We only prove the upper bound in (\ref{flupper}) for the case $\odot$ is `$+$'. Others are similar.
\begin{proof}
According to the rounding rule (\ref{roundingrule}),
\begin{align}
\mathrm{fl}\Big(\mathrm{fl}(a+b)\times \mathrm{fl}\big(1+\mathrm{fl}(4\epsilon)\big)\Big)&=\mathrm{fl}\Big((a+b)(1+\delta_1)\times \mathrm{fl}\big(1+4\epsilon(1+\delta_2)\big)\Big)\\
							&=\mathrm{fl}\Big( (a+b)(1+\delta_1)\times (1+\delta_3)\big(1+4\epsilon(1+\delta_2)\big)\Big)\\
							&=(a+b)(1+\delta_1)(1+\delta_3)(1+\delta_4)(1+4\epsilon+4\epsilon\delta_2)\\
							&\geq (a+b).
\end{align}
In the last inequality we have used $|\delta_1|$, $|\delta_2|$, $|\delta_3|$, $|\delta_4|\leq\epsilon$.
\end{proof}
\begin{Lem}
	\label{bmonotone}
	Assuming we have lower and upper bounds of $a$ and $b$,
	\begin{equation}
		a^{\max},\quad a^{\min},\quad b^{\max},\quad b^{\min}.
	\end{equation}
Let
	\begin{equation}
		\label{set}
		V=\{a^{\max}\odot b^{\max}, a^{\max}\odot b^{\min}, a^{\min}\odot b^{\max}, a^{\min}\odot b^{\min}\},
	\end{equation}
then
\begin{equation}
	\label{infsupv}
	a\odot b\leq \sup V,\quad a\odot b\geq \inf V,
\end{equation}
where $\odot$ can be $+$, $-$, $\times$ and $\div$. When $\odot$ is $\div$, we require $0 \notin [b^{\min},b^{\max}]$.
\end{Lem}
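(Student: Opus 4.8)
The plan is to reduce the statement to the elementary principle that a real-valued function which is monotone in each of its two arguments separately attains its extrema over a rectangle at the four vertices. First I would record the precise hypotheses, $a^{\min}\leq a\leq a^{\max}$ and $b^{\min}\leq b\leq b^{\max}$, together with the extra assumption $0\notin[b^{\min},b^{\max}]$ in the division case, which guarantees that $b$ keeps a fixed sign throughout the interval. The four corner values in $V$ are then all well defined.

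The key step is to verify coordinate-wise monotonicity for each of the four operations. For fixed $b$, the map $a\mapsto a\odot b$ is affine, hence monotone, for every $\odot\in\{+,-,\times,\div\}$, with slope $1$, $1$, $b$, and $1/b$ respectively. For fixed $a$, the map $b\mapsto a\odot b$ is affine for $+$, $-$, $\times$, while for $\div$ it equals $a\cdot b^{-1}$, which is monotone on $[b^{\min},b^{\max}]$ precisely because this interval excludes $0$; thus $b\mapsto a\odot b$ is monotone in every case. For $\times$ and $\div$ the \emph{direction} of monotonicity depends on the signs of $b$ and $a$, but monotonicity in one direction or the other always holds, which is all that the argument requires.

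Granting coordinate-wise monotonicity, I would conclude as follows. Fixing $a$ and using monotonicity in $b$ gives $a\odot b\leq\max\{a\odot b^{\min},\,a\odot b^{\max}\}$. Each quantity on the right is in turn bounded, using monotonicity in $a$, by $\max\{a^{\min}\odot b^{*},\,a^{\max}\odot b^{*}\}$ with $b^{*}\in\{b^{\min},b^{\max}\}$. Chaining these two estimates bounds $a\odot b$ by the largest of the four corner values, i.e. $a\odot b\leq\sup V$; the lower bound $a\odot b\geq\inf V$ follows by the identical argument with $\min$ and $\inf$ replacing $\max$ and $\sup$.

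The one place demanding care is the division case: the hypothesis $0\notin[b^{\min},b^{\max}]$ is essential, since without it $b\mapsto a/b$ fails to be monotone (indeed fails to be continuous) across $b=0$, and the vertex bound can break down. This is the only genuine obstacle; every operation is affine in each variable separately apart from the single reciprocal, and that reciprocal is precisely what the sign hypothesis tames, so no further difficulty is expected.
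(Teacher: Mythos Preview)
Your argument is correct. The paper itself declares the lemma obvious and omits the proof entirely, so your coordinate-wise monotonicity argument is exactly the sort of verification the authors had in mind; there is nothing to compare beyond that.
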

Lemma~\ref{bmonotone} is obvious, so we omit its proof here. 

\begin{Rem}
Lemma~\ref{roundlemma} and Lemma~\ref{bmonotone} allow us to rigorously control the roundoff error introduced in our numerical computation: Assuming we have lower and upper bounds of $a$ and $b$, which are floating point numbers stored exactly on computer. Then we can use Lemma~\ref{boundroundoff} to get lower and upper bounds for each element of (\ref{set}). Then putting these bounds together in (\ref{infsupv}) and using Lemma~\ref{bmonotone}, we can get rigorous bounds of $a\odot b$. The numerical computations involved in this paper are all composed of these 4 basic arithmetic operations, so by using the two Lemmas in each single step of our computation, we can rigorously bound the roundoff error in our final results.
\end{Rem}

When computing the truncated power series (\ref{partialsum}), we need to first compute the coefficients $U_k$ and $\rho_k$ based on (\ref{explicit}). The relation (\ref{explicit}), Lemma~\ref{bmonotone} and Lemma~\ref{boundroundoff} together allow us to get lower and upper bounds of the coefficients $U_k$ and $\rho_k$ inductively: Assuming now we have got lower and upper bounds of $U_m$ and $\rho_m$ for $m<k$, which are all floating point numbers, then based on (\ref{explicitrho}), and using Lemma~\ref{boundroundoff} and Lemma~\ref{bmonotone} in each single step of the computation, we can get lower and upper bounds of $\rho_k$. Then we use the same technique in (\ref{explicitU}) to get lower and upper bounds of $U_k$.

After getting the upper and lower bounds of $\rho_k$ and $U_k$ for all $k\leq 20$, we use Lemma~\ref{bmonotone} and Lemma~\ref{boundroundoff} in computing (\ref{partialsum}), and get lower and upper bounds of the truncated power series. Based on our computation, the difference of the lower bounds and upper bounds are all less than $5\times10^{-14}$, which means if we use the upper bounds to approximate the real values of the truncated power series (\ref{partialsum}), the numerical errors are less than $5\times10^{-14}$. We denote the upper bounds of the truncated power series (\ref{partialsum}) as
\begin{equation}
\label{farinitial}
\tilde y_0=(\tilde w_0,\tilde u_0,\tilde \rho_0)^T.
\end{equation}
Since the truncation error of the power series are less than $10^{-15}$, we get a rigorous error bound of the self-similar profiles (\ref{farinitial}) at $\eta_s=0.1$, which we denote by $E_0=(E_0^w,E_0^u,E_0^\rho)^T$,
\begin{equation}
\label{farinitialbound}
E_0=(10^{-13},10^{-13},10^{-13})^T.
\end{equation}
We will use (\ref{farinitial}) and (\ref{farinitialbound}) as initial conditions to numerically solve (\ref{solvefar}).\\

\noindent
{\em Step 3}\quad We need to bound the numerical error introduced in numerically solving (\ref{solvefar}). We will use the forward Euler method with step size $h=2.9\times 10^{-6}$ to solve (\ref{solvefar}) from $\eta_s=10^{-1}$ to $\eta_0=3$. We denote the node point and numerical solutions at the $n$-th step as
\begin{equation}
x_n=0.1+nh, \quad \tilde y_n=(\tilde u_n, \tilde \rho_n, \tilde w_n)^T,\quad n=0,\dots, 10^6,
\end{equation}
and the exact solutions and error bounds at the $n$-th step as
\begin{equation}
	y_n=(u_n,\rho_n, w_n)^T,\quad E_n=(E^w_n, E^u_n, E^\rho_n)^T,\quad n=0,\dots 10^6,
\end{equation}
which means
\begin{equation}
|y_n-\tilde y_n|\leq E^n.
\end{equation}

Based on the previous step, $\tilde y_0$ and $E^0$ given by (\ref{farinitial}) and (\ref{farinitialbound}). Our approach to bound the error introduced in numerically solving (\ref{solvefar}) is simultaneously tracking $\tilde y_n$ and $E^n$, i.e. in each step of the forward Euler method we update $\tilde y_n$ and $E^n$ together,
\begin{equation}
(\tilde y_n, E^n)\to (\tilde y_{n+1},E^{n+1}).
\end{equation}

Denoting $f=(f_w,f_u,f_\rho)^T$ as the RHS of (\ref{solvefar}), the forward Euler method gives
\begin{subequations}
	\begin{align}
	\label{numericalsolu}
\tilde w_{n+1}&=\tilde w_n+f_w(\tilde w_n, \tilde u_n, \tilde \rho_n, x_n)h+e^w_{\mathrm{roundoff}},\\
\tilde u_{n+1}&=\tilde u_n+f_u(\tilde w_n, \tilde u_n, \tilde \rho_n)h+e^u_{\mathrm{roundoff}},\\
\tilde \rho_{n+1}&=\tilde \rho_n+f_\rho(\tilde w_n, \tilde u_n, \tilde \rho_n)h+e^\rho_{\mathrm{roundoff}}.
\end{align}
\end{subequations}

For the exact solutions at $x_n$ and $x_{n+1}$ which are $y_{n}$ and $y_{n+1}$, we have
\begin{subequations}
\label{realsolu}
	\begin{align}
		w_{n+1}&=w_n+f_w(w_n, u_n, \rho_n, x_n)h+1/2\tilde W''(w^1, u^1, \rho^1,x^1)h^2,\\
		u_{n+1}&=u_n+f_u(w_n, u_n, \rho_n, x_n)h+1/2\tilde U''(w^2, u^2, \rho^2,x^2)h^2,\\
		\rho_{n+1}&=\rho_n+f_\rho(w_n, u_n, \rho_n, x_n)h+1/2\tilde \rho''(w^3, u^3, \rho^3,x^2)h^2,
	\end{align}
	where $x^i\in(x_n,x_{n+1})$, $w^i=\tilde W(x^i)$, $\rho^i=\tilde \rho(x^i)$, $u^i=\tilde U(x^i)$, $i=1,2,3$.
\end{subequations}

Deducting (\ref{numericalsolu}) from (\ref{realsolu}), we get
\begin{equation}
	y_{n+1}-\tilde y_{n+1}=(I+Ah)(y_n-\tilde y_n)+\frac{1}{2}y''(w^i,u^i,\rho^i,x^i)h^2-e_{\mathrm{roundoff}}.
\end{equation}
where $A$ is the gradient of the RHS $f$ with respect to $(w,u,\rho)$,
\begin{align}
	\label{amplify}
	A=&\nabla f(x_n,w^4,u^4,\rho^4)\\
	=&\left(\begin{array}{ccc}
\frac{-2c_l-u^4}{c_lx^n+u^4x_n}& \frac{c_l^2(w^4-2x_n^2\rho^4)+4c_l \rho^4x_n^2+c_lu^4w^4}{( u^4+c_l)^3x_n}& \frac{c_l(c_l-2)x_n}{(c_l+u^4)^2}\\	
c_l & 0 &0\\
0 & -\frac{c_l(c_l-2)\rho^4}{(c_l+u^4)^2\eta} &-\frac{c_l(u^4+5)+3u^4}{(c_l+u^4)x^n}\\
\end{array}
	\right)
\end{align}
with $y^4=(w^4,u^4,\rho^4)$ lies within the numerical solution $\tilde y_n$ and real solution $y_n$.

So we get a rigorous error bound at the $n+1$-st step,
\begin{align}
	\label{updateestimate}
	|y_{n+1}-\tilde y_{n+1}|&\leq|I+Ah|E_n+\frac{h^2}{2}|y''(w^i,u^i,\rho^i,x^i)|+|e_{\mathrm{roundoff}}|\\
	&=I_1+I_2+I_3,
\end{align}
where $I_1=|I+Ah|E_n$ is the propagation of error from the previous step, $I_2=h^2/2|y''(y^i,x^i)|$ is the local truncation error, and $I_3$ is the roundoff error in computing $\tilde y_{n+1}$ in (\ref{numericalsolu}).\\

\noindent
{\em Step 4}\quad We first consider the first part $I_1$. We can get lower and upper bounds of $y^4$ using $|y^4-\tilde y_n|\leq E_n$, Lemma~\ref{bmonotone} and Lemma~\ref{boundroundoff}. Since $A$ is explicitly given by (\ref{amplify}), we can get lower and upper bounds for each entry of $I+Ah$ using Lemma~\ref{bmonotone} and Lemma~\ref{boundroundoff}. The strategy is the same as {\em Step 2} and we omit the details here. Taking absolute value for these lower and upper bounds, we get upper bounds for $|I+Ah|$. Then we use Lemma~\ref{bmonotone} and Lemma~\ref{boundroundoff} in computing $|I+Ah|E_n$ and finally get rigorous upper bound of $I_1$.\\

\noindent
{\em Step 5}\quad Then we consider the second part $I_2$. To bound the local truncation error $I_2$, we need to bound the second order derivatives of the solutions on the interval $[x_n,x_{n+1}]$. For $c_l=3$ we have 
\begin{subequations}
	\begin{align}
		\tilde \rho''(\eta)&=\frac{3\tilde \rho(90+71\tilde U+14\tilde U^2-3\eta\tilde W)}{\eta^2(3+\tilde U)^2},\\
		\tilde W''(\eta)&=\frac{-18\eta^2\tilde \rho(\eta)(3+\tilde U+\eta \tilde W)+(3+\tilde U)\tilde W(54+21 \tilde U +2 \tilde U^2+ 9\eta \tilde W)}{\eta^2(3+\tilde U)^3},\\
		\tilde U''(\eta)=&\frac{9\eta^2\tilde \rho-3(18+9\tilde U+\tilde U^2)W}{\eta(3+\tilde U)^2}.
	\end{align}
	\label{2nd}
\end{subequations}

We need the following {\em a priori} estimates to bound (\ref{2nd}).
\begin{Lem}\label{aprioril}
	Consider the ODE system (\ref{solvefar}) with initial conditions given by power series (\ref{taylor}), assume at $\eta_0$, the solutions are $\tilde U(\eta_0)$, $\tilde W(\eta_0)$, $\tilde \rho(\eta_0)$, then for $\eta\in [\eta_0,\eta_0+h]$, we have the following {\em a priori} estimates
	\begin{subequations}
	\begin{equation}
		\tilde \rho(\eta)\in [\rho_{\min},\rho_{\max}],\quad \tilde U(\eta)\in[u_{\min},u_{\max}],\quad \tilde W(\eta)\in [w_{\min},w_{\max}]. 
	\end{equation}
	with 
	\begin{align}
		&\rho_{\max}=\tilde \rho(\eta_0)\eta_0^{c_l-sc_l}(\eta_0+h)^{sc_l-c_l}, &&\rho_{\min}=\tilde \rho(\eta_0)\eta_0^{3+c_l}(\eta_0+h)^{-3-c_l},\\
		 &u_{\min}=\tilde U(\eta_0), &&w_{\max}=\tilde W(\eta_0)+\frac{s^2c_l\rho_{\max}(\eta_0+h)h}{c_l-2},\\
		 &u_{\max}=\tilde U(\eta_0)+w_{\max}h, && w_{\min}=\tilde W(\eta_0)-h\frac{w_{\max}}{\eta_0}(1+\frac{c_l}{c_l+u_{\min}}).
	\end{align}
	\label{apriori}
	\end{subequations}
\end{Lem}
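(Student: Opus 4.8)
The plan is to treat the system (\ref{solvefar}) as a collection of one-sided differential inequalities on the short interval $[\eta_0,\eta_0+h]$ and to integrate them in a carefully chosen order, so that each bound uses only quantities already controlled. Two structural facts come first, both inherited from Theorem~\ref{gexist} through the change of variables (\ref{solvechange}). Since $W(\xi)>0$ and $\rho(\xi)>0$ for all $\xi>0$ by (\ref{basicprofile}), the transformed profiles satisfy $\tilde W(\eta)>0$ and $\tilde\rho(\eta)>0$ everywhere, and by (\ref{solvefar3}) we have $\tilde U'=c_l\tilde W>0$, so $\tilde U$ is strictly increasing; this immediately gives $\tilde U(\eta)\ge\tilde U(\eta_0)=u_{\min}$ on the interval. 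The second fact is the uniform positive lower bound $c_l+\tilde U(\eta)\ge (c_l-2)/s>0$, which follows from $\tilde U=\hat U>\hat U(0)=\tfrac{(1-s)c_l-2}{s}$ recorded in (\ref{prepare}) together with (\ref{u1}). This lower bound on the denominators $c_l+\tilde U$ is what makes every subsequent estimate quantitative.

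With these in hand I would first bound $\tilde\rho$. Writing (\ref{solvefar1}) as $\tilde\rho'/\tilde\rho=g(\eta)/\eta$ with $g=-(3+c_l)+\tfrac{c_l(c_l-2)}{c_l+\tilde U}$, the second term is positive and strictly decreasing in $\tilde U$, so the uniform lower bound on $c_l+\tilde U$ produces the enclosure $-(3+c_l)\le g\le sc_l-c_l$. Integrating this logarithmic derivative from $\eta_0$ to $\eta\in[\eta_0,\eta_0+h]$ and using the monotonicity of $\eta\mapsto\eta^{g}$ together with $\eta_0\le\eta\le\eta_0+h$ squeezes $\tilde\rho$ between two power functions, which are exactly $\rho_{\min}$ and $\rho_{\max}$.

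Next I would bound $\tilde W$ and then $\tilde U$, exploiting the signs of the two terms in (\ref{solvefar2}): the damping term $-\tfrac{(2c_l+\tilde U)\tilde W}{(c_l+\tilde U)\eta}$ is negative, since $2c_l+\tilde U>0$ and $\tilde W>0$, while the production term $\tfrac{c_l(c_l-2)\tilde\rho\eta}{(c_l+\tilde U)^2}$ is positive. For the upper bound $w_{\max}$ I drop the negative term and estimate the positive one by $\tilde\rho\le\rho_{\max}$, $\eta\le\eta_0+h$ and $(c_l+\tilde U)^2\ge((c_l-2)/s)^2$, then integrate over a length-$h$ interval. Feeding $\tilde W\le w_{\max}$ into (\ref{solvefar3}) and integrating yields $u_{\max}$. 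Finally, for the lower bound $w_{\min}$ I discard the positive production term, bound the damping factor by $\tfrac{2c_l+\tilde U}{c_l+\tilde U}=1+\tfrac{c_l}{c_l+\tilde U}\le 1+\tfrac{c_l}{c_l+u_{\min}}$ using $\tilde U\ge u_{\min}$, and then use $\tilde W\le w_{\max}$ and $\eta\ge\eta_0$ before integrating.

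Each individual estimate is elementary once the setup is fixed, so the only genuine difficulty is organizational rather than analytic: the six bounds are mutually dependent, and they must be sequenced ($u_{\min}$ and the $c_l+\tilde U$ lower bound first, then $\rho_{\min},\rho_{\max}$, then $w_{\max}$, then $u_{\max}$ and $w_{\min}$) so that no estimate is invoked before it is established, while the sign of each term in the right-hand side dictates which terms may be dropped for an upper versus a lower bound. These enclosures are precisely what is later substituted into the explicit second-derivative formulas (\ref{2nd}) to control the local truncation error in the rigorous Euler integration.
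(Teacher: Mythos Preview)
Your proposal is correct and follows essentially the same route as the paper's proof: both use the positivity of $\tilde W,\tilde\rho$, the monotonicity of $\tilde U$, and the uniform lower bound $c_l+\tilde U>(c_l-2)/s$ from (\ref{prepare}) to turn (\ref{solvefar}) into one-sided differential inequalities, and then integrate them in the same order ($u_{\min}$ and the $\tilde\rho$ bounds first, then $w_{\max}$, then $u_{\max}$, then $w_{\min}$). Your explicit algebraic decomposition $g=-(3+c_l)+c_l(c_l-2)/(c_l+\tilde U)$ makes the derivation of the $\tilde\rho$ bounds more transparent than the paper's terse statement, but the underlying argument is identical.
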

\begin{proof}
	According to (\ref{solvefar1}) and the lower bound of $\tilde U(\eta)$ (\ref{prepare}), we have
	\begin{equation}
		\tilde \rho'(\eta)\leq \frac{\tilde \rho(\eta)}{\eta}(sc_l-c_l),\quad \tilde \rho'(\eta)\geq \frac{\tilde \rho(\eta)}{\eta}(-3-c_l).
	\end{equation}
	By direct integration, we can get $\rho_{\max}$ and $\rho_{\min}$. Since $\tilde U(\eta)$ is increasing according to (\ref{solvefar3}), so we get the lower bound $u_{\min}$. Then using the upper bound $\rho_{\max}$ and the lower bound of $\tilde U(\eta)$ (\ref{prepare}) in (\ref{solvefar2}), we get
\begin{equation}
	\tilde W'(\eta)\leq \frac{s^2c_l\rho_{\max}(\eta_0+h)}{c_l-2}.
\end{equation}
By direct integration we get the upper bound $w_{\max}$. Putting the upper bound of $\tilde W(\eta)$ in (\ref{solvefar3}), we get the upper bound of $\tilde u(\eta)$, $u_{\max}$. Using the upper bound $w_{\max}$ and lower bound $u_{\min}$ in (\ref{solvefar2}), we have
\begin{equation}
	\tilde W'(\eta)\geq\frac{w_{\max}}{\eta_0}(-1-\frac{c_l}{c_l+u_{\min}}),
\end{equation} 
and with this we can get the lower bound of $\tilde W(\eta)$, $w_{\min}$.
\end{proof}

Using $|y_n-\tilde y_n|\leq E_n$ Lemma \ref{bmonotone} and Lemma~\ref{boundroundoff}, we can get lower and upper bounds of $y_n$. Putting these the upper and lower bounds in (\ref{apriori}), and using the same strategy as {\em Step 2}, we can get lower and upper bounds of the solutions on the interval $[x_n,x_{n+1}]$. Finally using these lower and upper bounds of the solutions in (\ref{2nd}) and using the same strategy as in {\em Step 2}, we can rigorously bound $I_2$ using only floating point operation.\\

\noindent
{\em Step 6}\quad The last part we need to control is $I_3$.  $I_3$ is the roundoff error $e_{\mathrm{roundoff}}$ in updating $\tilde y_{n+1}$ using (\ref{numericalsolu}). Using Lemma~\ref{bmonotone} and Lemma~\ref{boundroundoff} and the same strategy as {\em Step 2}, we can get lower and upper bounds of the exact numerical solutions at the $n+1$-th step using only floating point operation. In our computation we use the upper bounds as our numerical solutions at $n+1$-th step, then the roundoff error introduced in this step can be bounded by the difference between the lower and upper bounds. So using Lemma~\ref{bmonotone} and Lemma~\ref{boundroundoff} we can get upper bound of $I_3$ using only floating point operation.\\

\noindent
{\em Step 7}\quad 
Putting the three parts of error bounds together and using Lemma~\ref{bmonotone} and Lemma~\ref{boundroundoff}, we get a rigorous error bound in the next step $E_{n+1}$ using only floating point operation.\\

\noindent
{\em Step 8}\quad Keep updating $\tilde y_n$ and $E_n$ following {\em Steps 3-7}, we finally get the numerical solution of (\ref{solvefar}) at $\eta_0=3$ and a rigorous error bound, which are
\begin{equation}
	 \tilde U(\eta_0)\approx -1.6116,\quad\tilde W(\eta_0)\approx 3.6921\times 10^{-2},\quad \tilde \rho(\eta_0)\approx 3.8422\times 10^{-3}.
\end{equation}
and
\begin{equation}
E_n\leq(5\times10^{-5}, 5\times10^{-4}, 5\times10^{-6})^T.
\end{equation} 

With these numerical solutions and the error bounds, we can easily verify that condition (\ref{assump}) holds, and we complete the proof that for $s=2$,
\begin{equation}
	G(3)<0.
\end{equation}

\subsection{The case $s=2$, $c_l$=8}
The verification of $G(8)>0$ can be done in the same way. In the construction of the local solutions, we can easily verify the choice of
\begin{equation}
	u^0=\frac{1}{6},\quad \rho^0=\frac{1}{18},\quad r=6,
\end{equation}
makes the constraint (\ref{initialbound}) hold. Then we truncate the power series (\ref{taylor}) to the first $20$ terms and evaluate them at $\xi=0.6^8$, corresponding to $\eta_s=0.6$. Using the same technique as the case $c_l=3$, we can get the approximated profiles and error bounds at $\eta_s$,
\begin{equation}
\tilde y_0,\quad E_0=(10^{-13},10^{-13},10^{-13})^T.
\end{equation}

Then we begin to solve (\ref{solvefar}). Using the same technique as the previous case to bound the numerical error in solving the ODE system, we finally get
\begin{equation}
	\tilde U(3)\approx 4.7661\times 10^{-1},
\end{equation}
with numerical error bounded by
\begin{equation}
	E_n^u\leq 2\times 10^{-2}.
\end{equation} 
Then (\ref{assumpp}) holds and we complete the proof that for $s=2$,
\begin{equation}
G(8)>0.
\end{equation}

With $G(3)<0$, $G(8)>0$, we conclude that there exists a $c_l$ such that the self-similar equations have solutions, and the leading order of $\rho(\xi)$ at $\xi=0$ is $s=2$. 

Following the same procedure we also verified that for 
\begin{equation}
s=3,4,5,
\end{equation}
there exists $c_l$ such that the self-similar equations have solutions. We omit the numerical details here for clarity. Now we complete the proof of Theorem~\ref{firstresult}, i.e. there exist a family of self-similar profiles corresponding to different leading orders of $\rho(\xi)$, $s=2,3,4,5$.

\begin{Rem}
	We only verify the existence of self-similar profiles for $s=2,3,4,5$. But the same procedure can be applied to $s>5$ to verify the existence of self-similar profiles.
\end{Rem}

\section{Behavior of the Self-Similar Profiles at Infinity}
\label{farfield}
In this section, we prove that if the decay condition (\ref{gcon}) in the Biot-Savart law is satisfied for the self-similar profiles we construct, then the profiles are actually analytic with respect to a transformed variable $\theta=\xi^{-1/c_l}$ at $\theta=0$. With this we can complete the proof of Theorem~\ref{farana}. This far-field property of the profiles can explain the H\"older continuity of the velocity field at the singularity time observed in numerical simulation of this model.
\begin{Thm}\label{farfieldbehavior}
	For some fixed $c_l$ and $s$, if the self-similar profiles constructed using power series (\ref{taylor}) and extended to whole $R^+$ satisfy the decay condition (\ref{gcon}), then after the following change of variables,  
	\begin{equation}
		\label{farchange}
		\theta=\xi^{-1/c_l},\quad \tilde U(\theta)=U(\xi)\xi^{-1+1/c_l},\quad \tilde \rho(\theta)=\rho(\xi)\xi^{-1+2/c_l},\quad \tilde W(\theta)=W(\xi)\xi^{1/c_l},
	\end{equation}
	$\tilde U(\theta)$, $\tilde W(\theta)$ and $\tilde \rho(\theta)$ are analytic functions at $\theta=0$.
\end{Thm}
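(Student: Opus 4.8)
The plan is to rewrite the self-similar equations (\ref{targ}) (with (\ref{lbs})) as an ODE system in the variable $\theta=\xi^{-1/c_l}$ for the rescaled unknowns $\tilde U,\tilde W,\tilde\rho$ of (\ref{farchange}), and to recognize $\theta=0$ as a singular point of the first kind at which the classical theory of analytic ODEs applies. First I would carry out the change of variables. Using $\xi=\theta^{-c_l}$ and $\frac{d}{d\xi}=-\frac{1}{c_l}\theta^{c_l+1}\frac{d}{d\theta}$, a direct computation turns (\ref{targrho}), (\ref{targw}) and (\ref{lbs}) into (with a prime denoting $d/d\theta$)
\begin{subequations}
\begin{align}
\tilde\rho' &= \frac{(c_l-2)\tilde U\tilde\rho}{c_l+\theta\tilde U},\\
\tilde W' &= \frac{-\tilde U\tilde W}{c_l+\theta\tilde U}-\frac{c_l(c_l-2)\tilde\rho}{(c_l+\theta\tilde U)^2},\\
\theta\,\tilde U' &= -\tilde U-c_l\tilde W.
\end{align}
\end{subequations}
The first two equations have right-hand sides analytic near $\theta=0$ (the denominators equal $c_l>2$ there), so the only obstruction to analyticity is the factor $1/\theta$ in the third equation. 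For $(-\tilde U-c_l\tilde W)/\theta$ to stay bounded we need the compatibility condition $\tilde U(0)=-c_l\tilde W(0)$.

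Next I would show that the profiles extend continuously to $\theta=0$ with finite limits satisfying this compatibility condition; this is where the decay condition (\ref{gcon}) enters. Writing everything in the variable $\eta=\xi^{1/c_l}=1/\theta$ of Section~\ref{matching}, so that $\tilde U=\eta\hat U$, $\tilde W=\eta\hat W$, $\tilde\rho=\hat\rho$, the decay condition reads $\hat U(\eta)\to0$, whence $\hat U(\eta)=-c_l\int_\eta^\infty \hat W(s)/s\,\mathrm ds$. The a priori bound $\hat W(\eta)<C\exp(-C(\ln\eta)^\alpha)$ from the proof of Theorem~\ref{exist} already yields, by Fubini, $\int^\infty |\hat U|/\eta\,\mathrm d\eta<\infty$, and hence from (\ref{farode1}) that $\hat\rho(\eta)\to\tilde\rho_0>0$. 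I would then set $g=\eta\hat W=\tilde W$ and observe that $g'=a(\eta)\,g/\eta+b(\eta)$ with $a=O(\hat U)\to0$ satisfying $\int^\infty|a|/\eta<\infty$ and $b\sim(1-2/c_l)\tilde\rho_0\,\eta^{-2}$ integrable; variation of parameters then gives $g\to\tilde W_0$, i.e.\ $\hat W\sim\tilde W_0/\eta$. Finally $\tilde U=\eta\hat U=-c_l\,\eta\int_\eta^\infty \hat W(s)/s\,\mathrm ds\to-c_l\tilde W_0$, so all three limits exist and $\tilde U(0)=-c_l\tilde W(0)$ holds automatically.

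With the finite limits in hand I would view the system as $\theta\,\mathbf y'=\mathbf G(\theta,\mathbf y)$, $\mathbf y=(\tilde U,\tilde W,\tilde\rho)$, with $\mathbf G$ analytic near $(0,\mathbf y_0)$, $\mathbf y_0=(\tilde U_0,\tilde W_0,\tilde\rho_0)$, and $\mathbf G(0,\mathbf y_0)=0$ by compatibility. Since the $\tilde W$- and $\tilde\rho$-components of $\mathbf G$ carry an overall factor $\theta$ and contribute nothing to the linearization,
\begin{equation}
M=D_{\mathbf y}\mathbf G(0,\mathbf y_0)=\begin{pmatrix}-1&-c_l&0\\0&0&0\\0&0&0\end{pmatrix},
\end{equation}
with eigenvalues $\{0,0,-1\}$. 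No eigenvalue is a positive integer, so the recursion $(M-kI)\mathbf a_k=\mathbf b_k$ is uniquely solvable for every $k\geq1$, and the classical theorem on singular points of the first kind (formal power series plus a majorant argument) produces a unique solution analytic at $\theta=0$ with value $\mathbf y_0$. To finish I would identify this analytic solution with our extended profile: both solve the ODE, which is Lipschitz for $\theta>0$, and both tend to $\mathbf y_0$; the blow-up mode $\theta^{-1}$ attached to the eigenvalue $-1$ is excluded by boundedness, while the value $\mathbf y_0$ pins down the two $\lambda=0$ modes, so the difference vanishes by a Gronwall/eigenvalue argument. This proves $\tilde U,\tilde W,\tilde\rho$ are analytic at $\theta=0$.

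The main obstacle I expect is the second step. The decay bounds available from Theorem~\ref{exist} are only stretched-exponential in $\ln\eta$, far too weak to read off the sharp rate $\hat W\sim\tilde W_0/\eta$ directly, so the existence of the limits must be extracted by a careful bootstrap (weak bound $\Rightarrow$ integrability of $\hat U/\eta$ $\Rightarrow$ convergence of $\hat\rho$ $\Rightarrow$ convergence of $\eta\hat W$ via variation of parameters), with the compatibility $\tilde U_0=-c_l\tilde W_0$ emerging as a consequence rather than an assumption. The coincidence of the analytic solution with the profile in the last step is a secondary but genuine point: it relies essentially on the eigenvalue structure of $M$ (no positive-integer eigenvalue to guarantee the analytic solution, and a negative eigenvalue whose growing mode is killed by boundedness) rather than on any naive uniqueness statement for the singular ODE.
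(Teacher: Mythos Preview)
Your proposal is correct, and the overall architecture---bootstrap to show the limits $\tilde U(0),\tilde W(0),\tilde\rho(0)$ exist and satisfy the compatibility $\tilde U(0)=-c_l\tilde W(0)$, then analyticity plus uniqueness---is the same as the paper's. The final step, however, is handled by a genuinely different mechanism.

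The paper first proves $C^\infty$ smoothness at $\theta=0$ by a direct bootstrap, then \emph{separately} constructs an analytic power-series solution with the same data at $\theta=0$ (repeating the method of Section~\ref{nearfield}), and finally proves uniqueness of smooth solutions by a functional-analytic argument: the singular relation $\tilde U(\theta)=(c_l/\theta)\int_0^\theta\tilde W$ is controlled by Hardy's inequality, the remaining equations are Lipschitz, and a Poincar\'e--Friedrichs inequality on $[0,\epsilon]$ with $\epsilon$ small forces the difference of two solutions to vanish. Your route instead recognizes $\theta=0$ as a regular singular point of the first kind and invokes the classical Frobenius-type theory: since the linearization $M$ has spectrum $\{-1,0,0\}$ with no positive-integer eigenvalue, the recursion $(kI-M)\mathbf a_k=\mathbf b_k$ is solvable for every $k\ge1$ and the majorant method gives a convergent analytic solution; uniqueness follows because boundedness kills the $\theta^{-1}$ mode attached to the eigenvalue $-1$ and the prescribed value $\mathbf y_0$ kills the two $\lambda=0$ modes. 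This is more structural and makes explicit exactly which spectral obstructions could have blocked analyticity or uniqueness; the paper's argument is more self-contained (no appeal to the singular-point theorem) and its $L^2$ uniqueness proof avoids any eigenmode bookkeeping. Your bootstrap for the limits is essentially the paper's, though the paper re-derives sufficient decay from scratch inside the proof rather than importing the stretched-exponential bound from Theorem~\ref{exist}; either works. One point to tighten: the closing uniqueness sketch (``difference vanishes by a Gronwall/eigenvalue argument'') should be made precise for the nonlinear system, e.g.\ by writing the difference of the two bounded solutions as the solution of a linear singular system $\theta\mathbf z'=M\mathbf z+\theta B(\theta)\mathbf z$ with $B$ bounded near $\theta=0$, and then using the spectrum of $M$ to conclude $\mathbf z\equiv0$.
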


Our strategy is the following: We first prove that after the change of variables~(\ref{farchange}), $\tilde U(\theta)$, $\tilde W(\theta)$ and $\tilde \rho(\theta)$ are smooth at $\theta=0$. Then we argue that there exist analytic solutions to the ODE system they satisfy with the same initial conditions at $\theta=0$. Finally we prove that smooth solutions with given initial conditions are unique and complete the proof. 
\begin{proof}
If the decay condition~(\ref{gcon}) holds, $\hat U(\eta)$ tends to $0$ in equation (\ref{farode}), so there exists $\eta_0>0$ such that for $\eta>\eta_0$, 
\begin{equation}
	\frac{(2/c_l-1)\hat U(\eta)}{1+1/c_l\hat U(\eta)}\in (0,1/2).
\end{equation}
Then based on (\ref{farode1}), we have for $\eta>\eta_0$,
\begin{equation}
	\hat\rho'(\eta)\leq \frac{1/2\hat \rho(\eta)}{\eta},
\end{equation}
which implies that for $\eta>\eta_0$,
\begin{equation}
	\label{rhobound}
	\hat \rho(\eta)\leq C_1\eta^{1/2}.
\end{equation}
Using this estimate in (\ref{farode2}), we have for $\eta>\eta_0$,
\begin{equation}
	\big(\hat W(\eta)\eta\big)'\leq C_2\eta^{-3/2},
\end{equation}
which gives
\begin{equation}
	\label{Wbound}
\hat W(\eta)\eta<C_3.
\end{equation}
Using the above estimate in (\ref{farode3}), we get for $\eta>\eta_0$,
\begin{equation}
	\hat U'(\eta)\leq C_4\eta^{-2},
\end{equation}
which together with $\hat U(+\infty)=0$ implies that for $\eta>\eta_0$,
\begin{equation}
	\hat U(\eta)\leq C_5\eta^{-1}.
	\label{sboundu}
\end{equation}
Based on (\ref{farode2}) and (\ref{farode3}), we have
\begin{equation}
	\hat \rho'(\eta)=\frac{(2/c_l-2)\hat \rho(\eta)\hat U(\eta)}{\eta+1/c_l\hat U(\eta)\eta},\quad (\hat W(\eta)\eta)'=\frac{1/c_l\hat U(\eta)\hat W(\eta)}{1+1/c_l\hat U(\eta)}+\frac{(1-2/c_l)\hat \rho(\eta)}{(1+1/c_l\hat U(\eta))^2\eta^2}.	
	\label{farder}
\end{equation}
Using (\ref{sboundu}) and (\ref{rhobound}) in (\ref{farder}), we can see that $|\hat \rho'(\eta)|$  and $|(\hat W(\eta)\eta)'|$ are both integrable from $\eta_0$ to $+\infty$,  thus $\hat \rho(\eta)$ and $\hat W(\eta)\eta$ converge as $\eta\to +\infty$.
\begin{equation}
	\label{limit1}
	\lim_{\eta\to\infty} \hat W(\eta)\eta= \hat W_{\infty}\in[0,+\infty), \quad \lim_{\eta\to\infty} \hat \rho(\eta)= \hat \rho_{\infty}\in(0,+\infty).
\end{equation}
Then based on (\ref{farode3}), and the fact that $\hat U(+\infty)=0$, we have
\begin{equation}
	\label{limit2}
	\lim_{\eta\to+\infty}\hat U(\eta)\eta=c_l\hat W_{\infty}.
\end{equation}

The above limits imply that after changing variables, $\tilde U(\theta)$, $\tilde \rho(\theta)$ and $\tilde W(\theta)$ are continuous for $\theta\in [0,+\infty)$. The ODE system they satisfy for $\theta\in(0,+\infty)$ is
\begin{subequations}
	\begin{align}
		\tilde \rho'(\theta)&=\frac{(2/c_l-1)\tilde\rho(\theta)\tilde U(\theta)}{-1-\tilde U(\theta)\theta},\label{theta1}\\
		\tilde W'(\theta)&=\frac{1/c_l\tilde U(\theta)\tilde W(\theta)+(1-2/c_l)\tilde \rho(\theta)-1/c_l\tilde \rho'(\theta)\theta}{-1-\tilde U(\theta)\theta},\label{theta2}\\
		\tilde U'(\theta)&=-\frac{\tilde U(\theta)}{\theta}-\frac{c_l\tilde W(\theta)}{\theta},\label{theta3}
	\end{align}

	with initial conditions given by (\ref{limit1}) and (\ref{limit2}).
\begin{equation}
	\label{thetaic}
	\tilde W(0)=\hat W_{\infty},\quad \tilde \rho(0)=\hat \rho_\infty,\quad \tilde U(0)=c_l\hat W_\infty.
\end{equation}
	\label{far-field}
\end{subequations}

Equation (\ref{theta3}) can be written as 
\begin{equation}
	\tilde U(\theta)=\frac{c_l}{\theta}\int_0^\theta\tilde W(\zeta)\mathrm{d}\zeta.
\end{equation}
Using a simple bootstrap argument, we can get $\tilde W(\theta)$, $\tilde \rho(\theta)$ and $\tilde U(\theta)$ are in $C^\infty\big([0,+\infty)\big)$. 
On the other hand, given the initial conditions (\ref{thetaic}), we can construct the following power series solutions to equations (\ref{far-field}):
	\begin{equation}
		\label{farpower}
		\tilde U(\theta)=c_l\hat W_\infty+\sum_{k=1}^\infty\tilde U_k\theta^k,\quad 
		\tilde W(\theta)=\hat W_\infty+\sum_{k=1}^\infty\tilde W_k\theta^k,\quad
	 	\tilde \rho(\theta)=\hat \rho_\infty+\sum_{k=1}^\infty\tilde \rho_k\theta^k.
	\end{equation}

	Plugging these power series ansatz in (\ref{far-field}) and matching the coefficients of $\theta^k$, we can uniquely determine the coefficients $\tilde U_k$, $\tilde W_k$, $\tilde \rho_k$ and prove that the power series (\ref{farpower}) converge in a small neighborhood of $\theta=0$. We omit the details here, because the argument are the same as in section~\ref{nearfield}. Then to prove the analyticity of $\tilde{U}(\theta)$, $\tilde{W}(\theta)$ and $\tilde{\rho}(\theta)$ at $\theta=0$, we only need the uniqueness of smooth solutions to (\ref{far-field}) with initial condition (\ref{thetaic}). The RHS of (\ref{theta3}) is not Lipschitz continuous, so the classical result will not apply here.

Assume $\tilde U^i(\theta)$, $\tilde W^i(\theta)$, $\tilde \rho^i(\theta)$, $i=1,2$, are two different solutions to equation (\ref{far-field}) with initial condition (\ref{thetaic}). And let $\delta U(\theta)$, $\delta W(\theta)$, $\delta \rho(\theta)$ be the difference of the two solutions,
\begin{equation}
	\delta \tilde U(\theta)=\tilde U^1(\theta)-\tilde U^2(\theta),\quad
	\delta \tilde W(\theta)=\tilde W^1(\theta)-\tilde W^2(\theta),\quad
	\delta \tilde \rho(\theta)=\tilde \rho^1(\theta)-\tilde \rho^2(\theta).
\end{equation}
Then based on (\ref{theta3}),
	\begin{equation}
		\delta U(\theta)=\frac{c_l}{\theta}\int_0^\theta\delta W(\zeta)\mathrm{d}\zeta.
	\end{equation}
	Using hardy inequality\cite{garling2007}, there exists $C_1$ independent of $\epsilon$ such that
	\begin{equation}
	\label{hardyin}
		\|\delta \tilde U\|_{L^2([0,\epsilon])}\leq C_1\|\delta\tilde W\|_{L^2([0,\epsilon])}.
	\end{equation}
	Since the right hand side of (\ref{theta1}) and (\ref{theta2}) are Lipschitz continuous, we have
	\begin{equation}
		|\frac{\mathrm{d}}{\mathrm{d}\theta}(\delta\tilde W(\theta))|+|\frac{\mathrm{d}}{\mathrm{d}\theta}(\delta\tilde \rho(\theta)) )|\leq C_2(|\delta\tilde W(\theta)|+|\delta\tilde U(\theta)|+|\delta\tilde \rho(\theta)|)
	\end{equation}
	Integrating the square of both sides on the interval $[0,\epsilon]$ and using (\ref{hardyin}), we get
	\begin{equation}
		\label{lipin}
		\|\big(\delta \tilde W(\theta)\big)'\|_{L^2([0,\epsilon])}+\|\big(\delta \tilde \rho(\theta)\big)'\|_{L^2([0,\epsilon])}\leq C_3(\|\delta \tilde W(\theta)\|_{L^2([0,\epsilon])}+\|\delta \tilde \rho(\theta)\|_{L^2([0,\epsilon])}).
	\end{equation}
Since $\delta\tilde W(\theta)$ and $\delta\tilde \rho(\theta)$ vanish on $\theta=0$, by Poincar\'e-Friedrichs inequality we have
\begin{equation}
	\label{pfin}
	\|\delta \tilde W(\theta)\|_{L^2([0,\epsilon])}+\|\delta \tilde \rho(\theta)\|_{L^2([0,\epsilon])}\leq C_4\epsilon(\|\big(\delta \tilde W(\theta)\big)'\|_{L^2([0,\epsilon])}+\|\big(\delta \tilde \rho(\theta)\big)'\|_{L^2([0,\epsilon])}). 
\end{equation}

The $C$ in the above estimates are all positive constants independent of $\epsilon$. Choosing $\epsilon$ small enough, we get a contradiction in (\ref{lipin}) and (\ref{pfin}), thus
\begin{equation}
	\tilde W^1=\tilde W^2, \quad \tilde U^1=\tilde U^2,\quad \tilde \rho^1=\tilde \rho^2,
\end{equation}
which means the solution is unique. And we complete the proof of this Theorem.
\end{proof}

The above Theorem implies the self-similar profiles we construct are non-conventional in the sense that the velocity does not decay to $0$ at $+\infty$ but grows with certain fractional power. Coming back to the self-similar ansatz (\ref{ansatz}), we have
\begin{equation}
	u(x,t)=(T-t)^{c_l-1}U\left(\frac{x}{(T-t)^{c_l}}\right).
\end{equation}
For $t$ close to $T$, based on Theorem~\ref{farfieldbehavior} , we have
\begin{equation}
	\label{farholder}
	u(x,t)\approx C(T-t)^{c_l-1}\frac{x}{(T-t)^{c_l}}\left(\frac{x}{(T-t)^{c_l}}\right)^{-\frac{1}{c_l}}=Cx^{1-\frac{1}{c_l}}.
\end{equation}

This can explain the H\"older continuity of the velocity field near singularity time, which is observed in simulation of the 1D model. Such behavior is also observed in the numerical computation of the 3D Euler equations in ~\cite{hou2013finite}, which is very different from the Leray type of self-similar solutions of the 3D Euler equations, whose existence has been ruled out under certain decay assumptions on the self-similar profiles \cite{chae2007nonexistence, chae2007nonexistence2, chae2011self}. 

\section{Numerical Results}
\label{compare}
In this section we numerically locate $c_l$ which makes $G(c_l)=0$ for several different $s$ and construct the corresponding self-similar profiles. The scaling exponents and self-similar profiles obtained from solving the self-similar equations (\ref{targ}) agree with those obtained from direct numerical simulation of the CKY model. We also find that for fixed $s$, the singular solutions using different initial conditions converge to the same self-similar profiles after rescaling, which means the self-similar profiles we find have some stability property.

\subsection{Numerical methods for solving the self-similar equations}
For any fixed $c_l>2$, we first numerically compute the coefficients $\rho_k$, $U_k$ in (\ref{taylor}) up to $k=50$ and determine the convergence radius of the power series using the following linear regression for $s\leq k\leq 50$,
\begin{equation}
	\log \rho_k=k \log r_1+c_1,\quad \log U_k=k\log r_2+c_2.
\end{equation}
We choose
\begin{equation}
	r=\frac{1}{2}\min\{\frac{1}{r_1},\frac{1}{r_2}\}.
\end{equation}
and construct the local self-similar profiles on $[0,r/2]$.

Then we continue solving equation~(\ref{targ}) from $\xi=r/2$ to $\xi=1$ using the 4th order explicit Runge-Kutta method with step-size $h=\frac{1-r/2}{10^4}$. After $\xi=1$, we make the change of variables (\ref{changeeta}) and begin to solve (\ref{farode}) from $\eta=1$ to $\eta=10^5$ using 4th order Runge-Kutta method with step-size $h=\frac{10^5-1}{10^6}$. We use $\hat U_{c_l}(10^5)$ as an approximation to $G(c_l)$.

We use the bisection method to find the root of $G(c_l)$, and the stopping criterion is that the length of the subinterval becomes less than $10^{-5}$. After we get the scaling exponent $c_l$, we construct the local self-similar profiles using power series (\ref{taylor}) as before and numerically extend them from $\xi=r/2$ to $\xi=10$ using the explicit 4th order Runge-Kutta method with step-size $h=\frac{9}{10^4}$. Then we locate the maxima of $W$, which is $W_{\max}=W(\xi_0)$. We consider $s=2, 3, 4, 5$, and for these cases $\xi_0$ are all less than $10$. Finally we rescale the maximum of the $W(\xi)$ to $(1,1)$, and  get the rescaled self-similar profile of $w$
\begin{equation}
	\label{selfsimilarprofile}
	W_s(\xi)=\frac{1}{W_{\max}}W(\xi\xi_0),\quad \xi\in [0,1].
\end{equation}

We will only compare the self-similar profiles of $W_s$ with direct simulation of the CKY model in this paper, but the numerical results for the profiles of $\rho$ and $U$ are similar.

\subsection{Numerical methods for simulating the model}
In the direct simulation of the CKY model, we use a particle method. We consider $N+1$ particles with position, density and vorticity given by
\begin{equation}
	\begin{cases} 
		q=(q_0(t),q_1(t),\dots q_N(t))^T,\\
		\rho=(\rho_0(t),\rho_1(t),\dots \rho_N(t))^T,\\
		w=(w_0(t),w_1(t),\dots w_N(t))^T.
	\end{cases}
\end{equation}
In computing the velocity field, we use the trapezoidal rule to approximate (\ref{sBS}),
\begin{equation}
	u_i=-q_i\left(\sum_{j=i}^{N-1}\frac{w_j+w_{j+1}}{2}(q_{j+1}-q_{j})\right).
\end{equation}
In computing the driving force of $w$, which is $\rho_x$, we use the three points rule:
\begin{equation}
	(\rho_x)_i=
	\begin{cases}
	0,\quad &i=0,\\
	\frac{\rho_i-\rho_{i+1}}{q_i-q_{i+1}}+\frac{\rho_i-\rho_{i-1}}{q_i-q_{i-1}}+\frac{\rho_{i+1}-\rho_{i-1}}{q_{i+1}-q_{i-1}},\quad &0<i<N,\\
	\frac{\rho_i-\rho_{i-2}}{q_i-q_{i-2}}+\frac{\rho_i-\rho_{i-1}}{q_i-q_{i-1}}+\frac{\rho_{i-2}-\rho_{i-1}}{q_{i-2}-q_{i-1}}, &i=N.
\end{cases}
\end{equation}
Initially, $10^5+1$ particles are equally put in the short interval $[0,10^{-3}]$, which are sufficient to resolve the solutions in the self-similar regime. Outside this short interval $10^5-10^2$ particles are equally placed with distance $10^{-5}$. So the total number of particles is $N=2\times 10^5-10^2$.

Then we need to solve the following ODE system
\begin{equation}
	\label{directcky}
		\frac{\mathrm{d}}{\mathrm{d}t}q=u,\quad \frac{\mathrm{d}}{\mathrm{d}t}w=\rho_x,\quad \frac{\mathrm{d}}{\mathrm{d}t}\rho=0.
\end{equation}
The initial condition of $\rho$ is
\begin{equation}
	\rho(x,0)=(1-\cos(\pi x))^{s/2},
\end{equation}
whose leading order at $x=0$ is $s$.

We solve the ODE system~\ref{directcky}  using the $4$-th order explicit Runge-Kutta method, and the time step $dt$ is chosen adaptively to avoid the particles cross each other:
\begin{equation}
	dt_i=\frac{1}{\max(\frac{u_i-u_{i+1}}{q_{i+1}-q_i},0)}, \quad dt=\min(\frac{dt_i}{10},10^{-3}).
\end{equation}

Simulation of this model will stop once the maximal vorticity reaches some preset $W_{\max}$. And at each time step, we record the maximal vorticity $w_{\max}(t_i)$, and the position where it is attained $q_{\max}(t_i)$. According to the self-similar ansatz, we have
\begin{equation}
	w_{\max}(t)=C_1(T-t)^{c_w},\quad q_{\max}(t)=C_2(T-t)^{c_l}.
\end{equation}
Thus we can compute $c_l$ $c_w$, and the singularity time $T$ by doing linear regression,
\begin{subequations}
	\begin{align}
		(\frac{\mathrm{d}}{\mathrm{d}t}\log w_{\max}(t))^{-1}&\approx-\frac{1}{c_w}t+\frac{T}{c_w},\label{wregress}\\
		(\frac{\mathrm{d}}{\mathrm{d}t}\log q_{\max}(t))^{-1}&\approx-\frac{1}{c_l}t+\frac{T}{c_l}.\label{qregress}
	\end{align}
\end{subequations}

We compute the time derivatives of $\log w_{\max}(t)$ and $\log q_{\max}(t)$ using the center difference method, and the linear regressions are done in some time interval close to the singularity time while the numerical solutions still have good accuracy.

At certain time steps close to the singularity time, $t^i$, $i=1,2,3$, let $w^i$ be the maximal vorticity at time $t^i$ and $q^i$ be the position the maximal vorticity is attained. We rescale the numerical solution and get the self-similar profile of $w$,
\begin{equation}
	\label{profiledirect}
	W^i_s(\xi)=\frac{1}{w_{\max}}w(\xi q^i, t^i),\quad \xi\in[0,1].
\end{equation}

In the next subsection we will compare the self-similar profiles $W^i_s(\xi)$ (\ref{profiledirect}), which are obtained from direct simulation of the model, with $W_s(\xi)$ given by  (\ref{selfsimilarprofile}), which is obtained from solving the self-similar equations (\ref{targ}). 

Near the singularity time the velocity field seems to be H\"older continuous near the origin, 
\begin{equation}
	u(x,T)\approx Cx^\alpha.
\end{equation}
Then we can determine the H\"older exponent $\alpha$ by doing linear regression
\begin{equation}
	\label{aregress}
	\ln u(x,T)\approx\ln C+\alpha\ln x.
\end{equation}
We will compare the exponents $\alpha$ we get from direct simulation of the CKY model with our prediction (\ref{farholder})
obtained from analyzing the self-similar equations. 
\subsection{Comparison results}
In our direct simulation of the CKY model, we first choose the initial condition of $w(x,t)$ as
\begin{equation}
	\label{1stw}
	w(x,0)=1-\cos(4\pi x).
\end{equation}
We compute the scaling exponents $c_w$ and $c_l$ for different leading orders of $\rho$, $s=2,3,4,5$ using direct simulation of the CKY model, and the results are listed in Table~\ref{tab:cwtable} and Table~\ref{tab:cltable}. We also compute the H\"older exponents of the velocity field near the origin at the singularity time and compare them with $1-1/c_l$ as predicted by (\ref{farholder}). The results are listed in Table~\ref{tab:holder}. The $c_l$ we use are those obtained from solving the self-similar equations.

For $s=2$, the linear regression (\ref{wregress}) and (\ref{qregress}) are done in the time interval 
\begin{equation}
	[6.4371\times 10^{-1}, 6.4391\times 10^{-1}],
\end{equation}
and the predicted singularity time $T$ for (\ref{wregress}) and (\ref{qregress}) are both $6.4402\times 10^{-1}$. The linear regression (\ref{aregress}) is done at $t=6.4391\times 10^{-1}$ and on the interval $[10^{-10},10^{-9}]$. 

For $s=3$, the linear regression (\ref{wregress}) and (\ref{qregress}) are done in the time interval 
\begin{equation}
	[6.804297\times 10^{-1}, 6.804300\times 10^{-1}],
\end{equation}
and the predicted singularity time $T$ for (\ref{wregress}) and (\ref{qregress}) are both $6.804302\times 10^{-1}$. The linear regression (\ref{aregress}) is done at $t=6.804302\times 10^{-1}$ and on the interval $[10^{-10},10^{-9}]$. 

For $s=4$, the linear regression (\ref{wregress}) and (\ref{qregress}) are done in the time interval 
\begin{equation}
	[6.571218\times 10^{-1}, 6.571221\times 10^{-1}],
\end{equation}
and the predicted singularity time $T$ for (\ref{wregress}) and (\ref{qregress}) are both $6.571223\times 10^{-1}$. The linear regression (\ref{aregress}) is done at $t=6.571223\times 10^{-1}$ and on the interval $[10^{-10},10^{-9}]$. 

For $s=5$, the linear regression (\ref{wregress}) and (\ref{qregress}) are done in the time interval 
\begin{equation}
	[5.9698511\times 10^{-1}, 5.9698515\times 10^{-1}],
\end{equation}
and the predicted singularity time $T$ for (\ref{wregress}) and (\ref{qregress}) are both $5.9698517\times 10^{-1}$. The linear regression (\ref{aregress}) is done at $t=5.9698517\times 10^{-1}$ and on the interval $[10^{-10},10^{-9}]$. 
\begin{table}
	\centering
	\begin{tabular}{|c|c|c|c|c|}
		\hline
		& $s=2$ & $s=3$ & $s=4$ & $s=5$\\
		\hline
		$c_w$& $-0.9747$ & $-1.0001$& $-1.0006$ & $-1.0007$ \\
		\hline
	\end{tabular}
	\caption{$c_w$ table}
	\label{tab:cwtable}
\end{table}

From the Table~\ref{tab:cwtable}, \ref{tab:cltable} \ref{tab:holder}, we can see that the exponents $c_w$ we obtain from the singular numerical solutions are close to $-1$ (\ref{rexponent}). The $c_l$ we obtain from direct simulation of this model are close to those obtained from solving the self-similar equations. And at the singularity time the H\"older exponents of the velocity field are close to $1-1/c_l$. 

For the case $s=2$, the dependence of $G(c_l)$ on $c_l$ is plotted in Figure (\ref{fig:gcl}). From this Figure, we can see that $G(c_l)$ depends continuously on $c_l$ as we have proved. Besides, $G(c_l)$ seems to be a monotone increasing function, which implies that for fixed $s$, the scaling exponent $c_l$ to make the decay condition~(\ref{gcon}) hold is unique.
\begin{figure}[htpb].
	\begin{center}
		\includegraphics[width=0.5\textwidth]{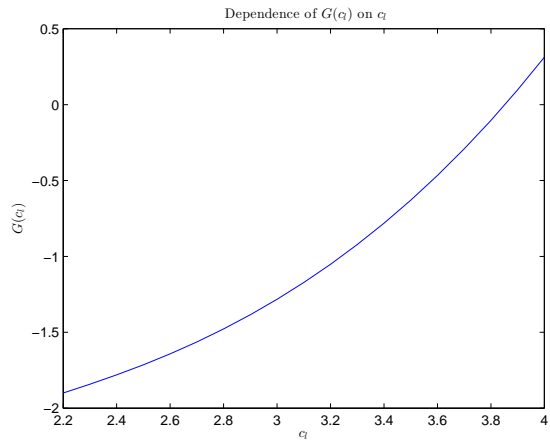}
	\end{center}
	\caption{Dependence of $G(c_l)$ on $c_l$ for $s=2$}
	\label{fig:gcl}
\end{figure}

\begin{table}
	\centering
	\begin{tabular}{|c|c|c|c|c|}
		\hline
		& $s=2$ & $s=3$ & $s=4$ & $s=5$\\
		\hline
		CKY& $3.7942$ & $3.3143$& $3.1718$ & $3.0773$ \\
		\hline
		Self-Similar& $3.7967$ & $3.3157$& $3.1597$& $3.0841$ \\
		\hline
	\end{tabular}
	\caption{$c_l$ table}
	\label{tab:cltable}
\end{table}
\begin{table}
	\centering
	\begin{tabular}{|c|c|c|c|c|}
		\hline
		& $s=2$ & $s=3$ & $s=4$ & $s=5$\\
		\hline
		H\"older exponent& $7.3381\times 10^{-1}$ & $6.9823\times 10^{-1}$& $6.9131\times 10^{-1}$ & $6.7610\times 10^{-1}$ \\
		\hline
		$1-1/c_l$& $7.3661\times 10^{-1}$ & $6.9841\times 10^{-1}$& $6.8351\times 10^{-1}$& $6.7576\times 10^{-1}$ \\
		\hline
	\end{tabular}
	\caption{H\"older exponent of the velocity field at $0$ and $1-1/c_l$.}
	\label{tab:holder}
\end{table}

We also compare the self-similar profiles obtained from solving (\ref{targ}) and those obtained from direct simulation of the model. For different $s$, they are plotted in Figure~\ref{fig:wfile}. The lines labeled `exact' are obtained from solving the self-similar equation. Others are obtained from rescaling the solution at different time steps corresponding to different maximal vorticity.
\begin{figure}[htpb]
\centering
\begin{subfigure}{.5\textwidth}
\centering
\includegraphics[width=0.7\textwidth]{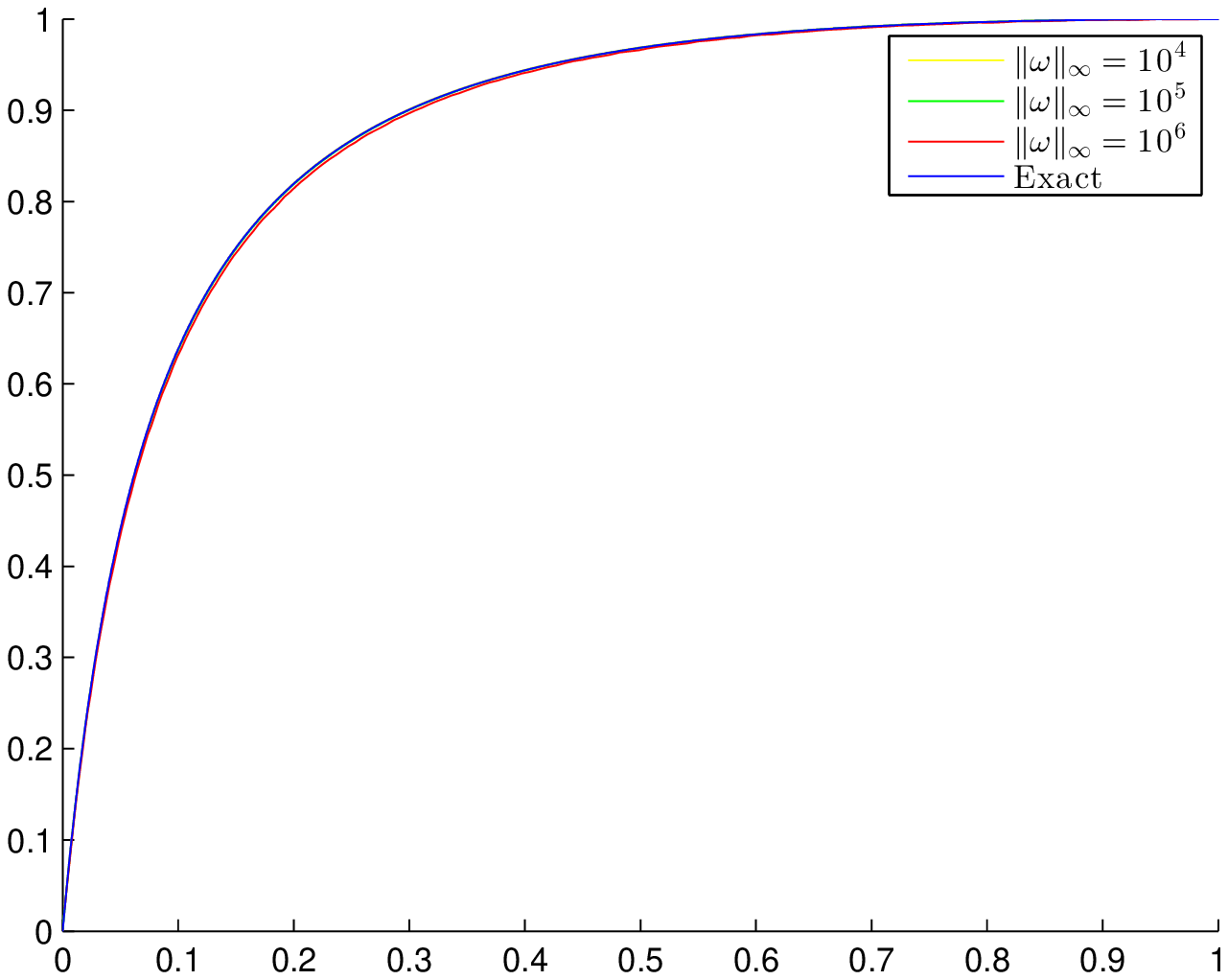}
\caption{The re-scaled solutions and self-similar profiles we construct. $s=2$.}
\end{subfigure}%
\begin{subfigure}{.5\textwidth}
\centering
\includegraphics[width=0.7\textwidth]{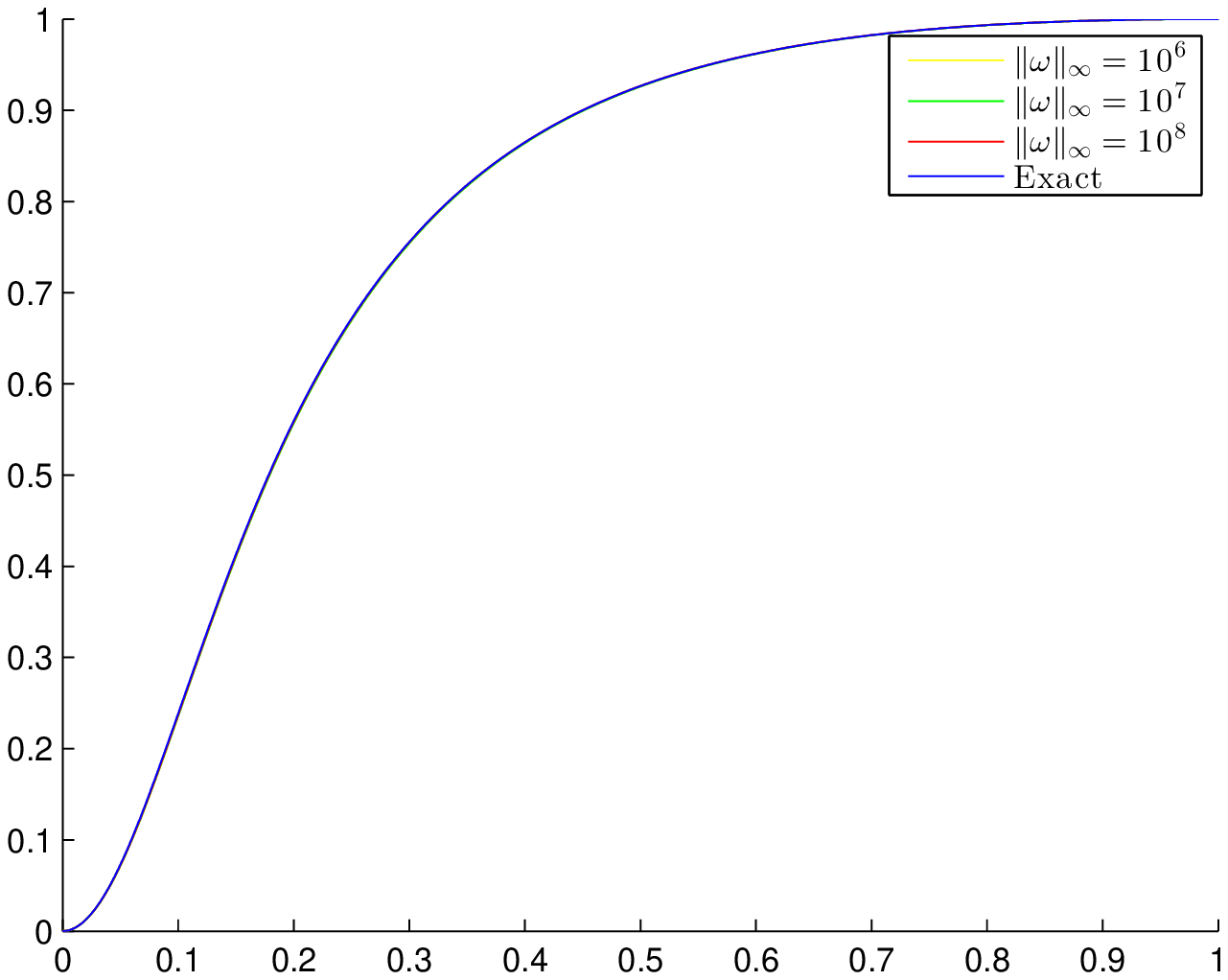}
\caption{The re-scaled solutions and self-similar profiles we construct. $s=3$.}
\end{subfigure}
\begin{subfigure}{.5\textwidth}
\centering
\includegraphics[width=0.7\textwidth]{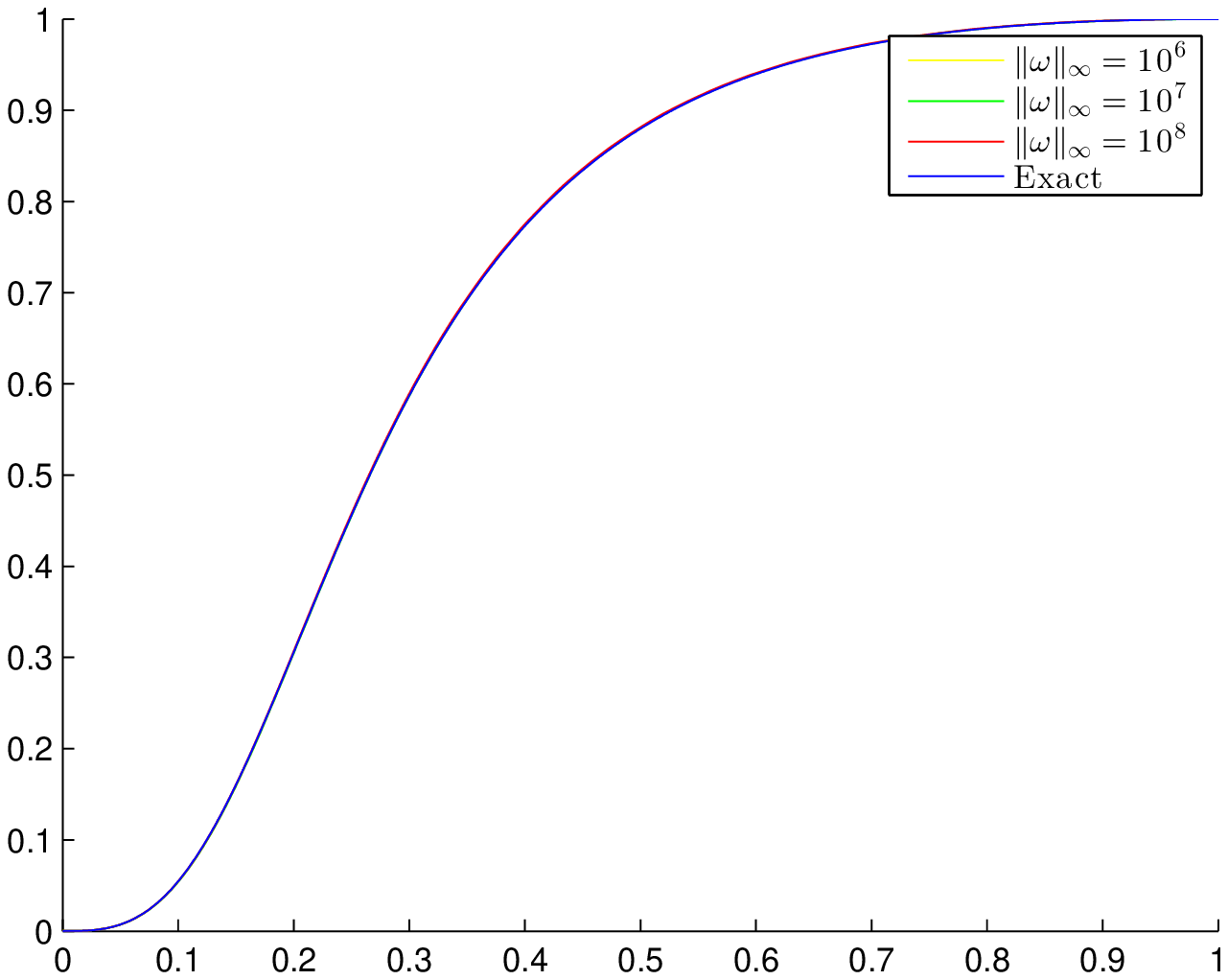}
\caption{The re-scaled solutions and self-similar profiles we construct. $s=4$.}
\end{subfigure}%
\begin{subfigure}{.5\textwidth}
\centering
\includegraphics[width=0.7\textwidth]{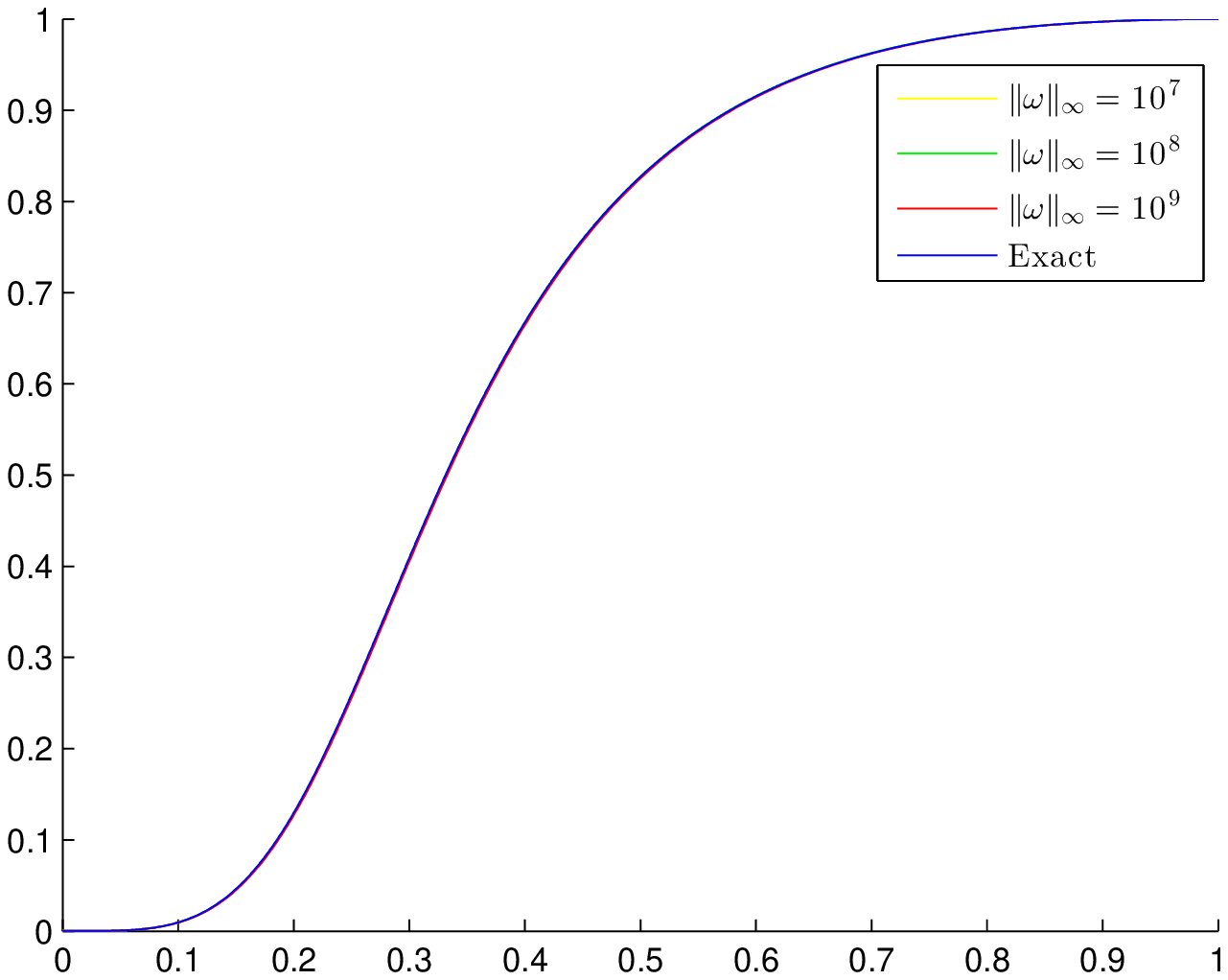}
\caption{The re-scaled solutions and self-similar profiles we construct. $s=5$.}
\end{subfigure}
\caption{Self-similar profiles of $w$ using initial condition $w(x,0)=1-\cos(4\pi x)$. }\label{fig:wfile}
\end{figure}

To demonstrate the stability the self-similar profiles, we consider another initial condition,
\begin{equation}
	w(x,0)=x-x^2.
	\label{2ndw}
\end{equation}
Again we compare the self-similar profiles of $w$ from direct simulation of the model with those obtained from solving the self-similar equations (\ref{targ}). They are plotted in Figure~\ref{fig:wfile2}.
\begin{figure}[htpb]
\centering
\begin{subfigure}{.5\textwidth}
\centering
\includegraphics[width=0.7\textwidth]{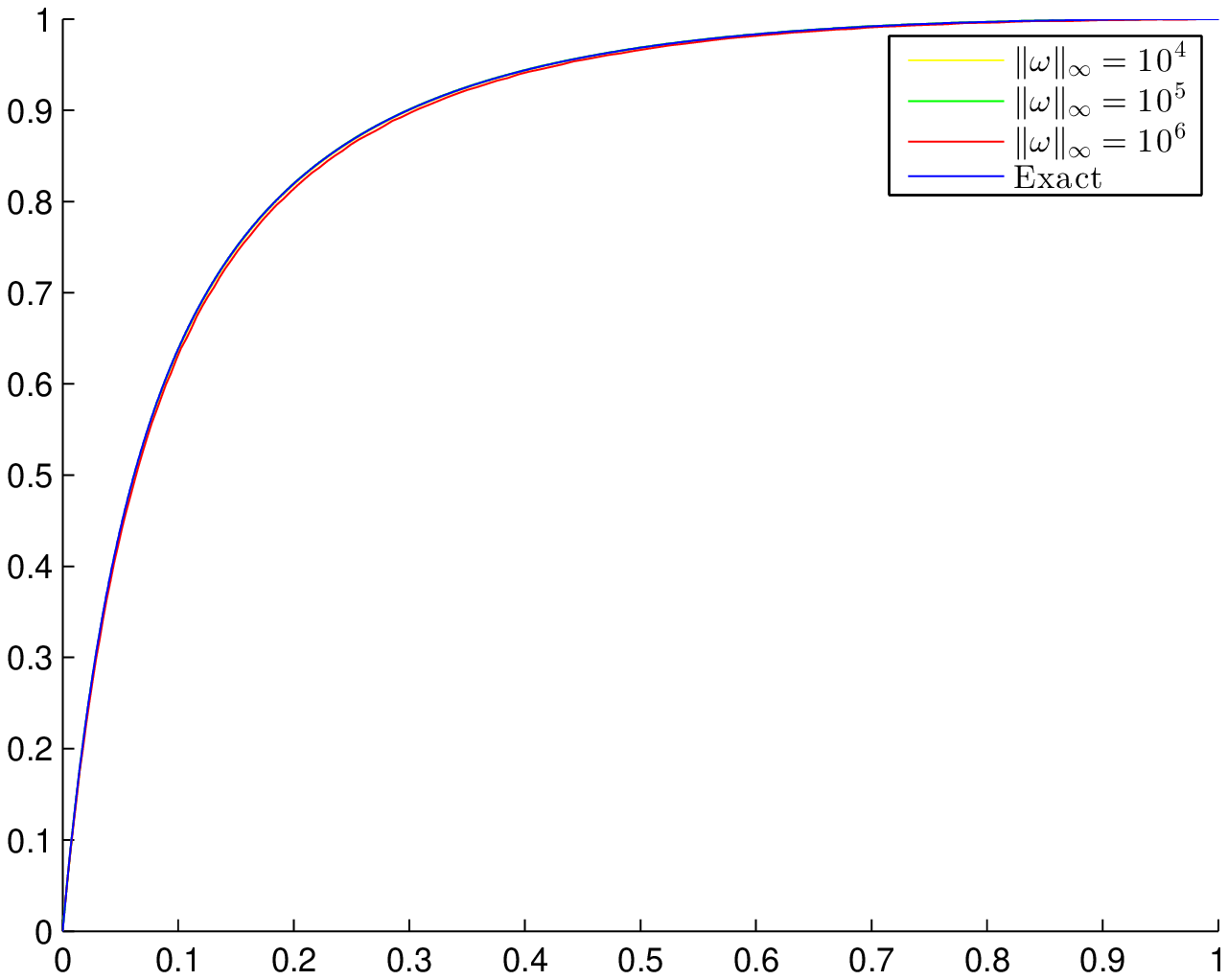}
\caption{$s=2$}
\end{subfigure}%
\begin{subfigure}{.5\textwidth}
\centering
\includegraphics[width=0.7\textwidth]{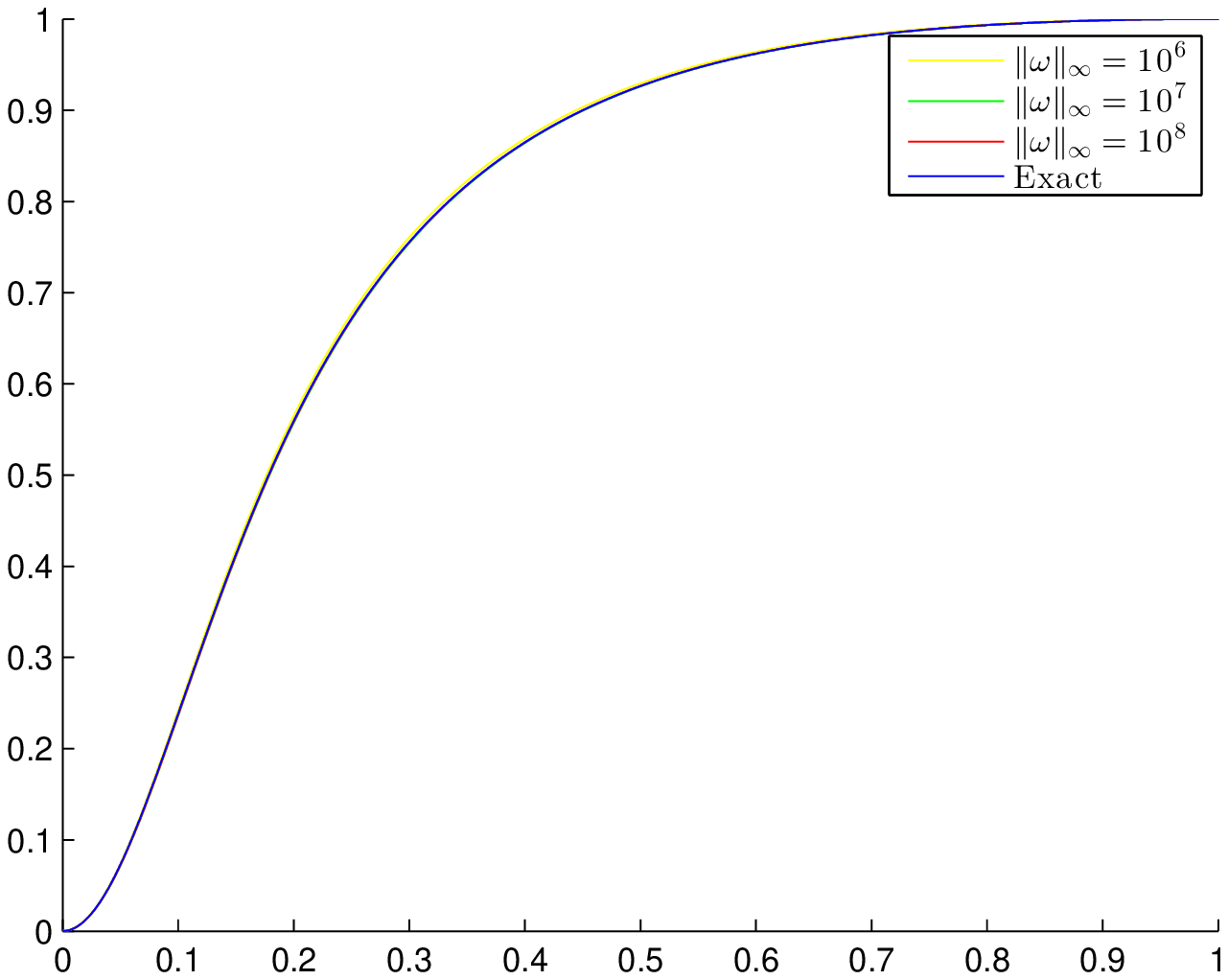}
\caption{$s=3$}
\end{subfigure}
\caption{Self-similar profiles of $w$ using initial condition $w(x,0)=x-x^2$.}\label{fig:wfile2}
\end{figure}

From Figure~\ref{fig:wfile},~\ref{fig:wfile2}, we can see that after rescaling, the singular solutions at different time steps before the singularity time are very close, which implies that the solutions of this 1D model develop self-similar singularity. And the self-similar profiles obtained from direct simulation of the model (\ref{profiledirect}) agree very well with the self-similar profiles we construct (\ref{selfsimilarprofile}) by sovling the self-similar equations (\ref{targ}). For fixed leading order of $\rho(x,0)$ at the origin, the singular solutions with different initial conditions converge to the same set of self-similar profiles, which implies that the profiles we construct have some stability property.

\vspace{0.2in}
{\bf Acknowledgements.} The authors would like to thank Professors Russel Caflisch and Guo Luo for a number of stimulating discussions. We would also like to thank Professors Alexander Kiselev and Yao Yao for their interest in our work and for their valuable comments. The research was in part supported by NSF FRG Grant DMS-1159138.

\bibliographystyle{plain}
\bibliography{selfsimilar}

\begin{thebibliography}{10}

\bibitem{appel1976proof}
Kenneth Appel and Wolfgang Haken.
\newblock Proof of 4-color theorem.
\newblock {\em Discrete Mathematics}, 16(2):179--180, 1976.

\bibitem{bardos2007euler}
Claude Bardos and Edriss Titi.
\newblock Euler equations for incompressible ideal fluids.
\newblock {\em Russian Mathematical Surveys}, 62(3):409, 2007.

\bibitem{chae2007nonexistence2}
Dongho Chae.
\newblock Nonexistence of asymptotically self-similar singularities in the
  euler and the navier--stokes equations.
\newblock {\em Mathematische Annalen}, 338(2):435--449, 2007.

\bibitem{chae2007nonexistence}
Dongho Chae.
\newblock Nonexistence of self-similar singularities for the 3d incompressible
  euler equations.
\newblock {\em Communications in Mathematical Physics}, 273(1):203--215, 2007.

\bibitem{chae2011self}
Dongho Chae.
\newblock On the self-similar solutions of the 3d euler and the related
  equations.
\newblock {\em Communications in Mathematical Physics}, 305(2):333--349, 2011.

\bibitem{choi2014on}
Kyudong Choi, Thomas~Y. Hou, Alexander Kiselev, Guo Luo, Vladimir Sverak, and
  Yao Yao.
\newblock On the fiinite-time blowup of a 1d model for the 3d axisymmetric
  euler equations.
\newblock {\em arXiv preprint arXiv:1407.4776}, 2014.

\bibitem{choi2013finite}
Kyudong Choi, Alexander Kiselev, and Yao Yao.
\newblock Finite time blow up for a 1d model of 2d boussinesq system.
\newblock {\em arXiv preprint arXiv:1312.4913}, 2013.

\bibitem{coddington1955theory}
Earl~A Coddington and Norman Levinson.
\newblock {\em Theory of ordinary differential equations}.
\newblock Tata McGraw-Hill Education, 1955.

\bibitem{constantin2007euler}
Peter Constantin.
\newblock On the euler equations of incompressible fluids.
\newblock {\em Bulletin of the American Mathematical Society}, 44(4):603--621,
  2007.

\bibitem{fefferman1996interval}
Charles~L Fefferman and Luis~A Seco.
\newblock Interval arithmetic in quantum mechanics.
\newblock In {\em Applications of interval computations}, pages 145--167.
  Springer, 1996.

\bibitem{folland1995}
Gerald~B Folland.
\newblock {\em Introduction to partial differential equations}.
\newblock Princeton University Press, 1995.

\bibitem{garling2007}
D.J.H. Garling.
\newblock {\em Inequalities: a journey into linear analysis}, volume~19.
\newblock Cambridge University Press Cambridge, 2007.

\bibitem{gibbon2008three}
John~D Gibbon.
\newblock The three-dimensional euler equations: Where do we stand?
\newblock {\em Physica D: Nonlinear Phenomena}, 237(14):1894--1904, 2008.

\bibitem{hales2005proof}
Thomas~C Hales.
\newblock A proof of the kepler conjecture.
\newblock {\em Annals of mathematics}, pages 1065--1185, 2005.

\bibitem{hou2013finite}
Thomas~Y Hou and Guo Luo.
\newblock On the finite-time blowup of a 1d model for the 3d incompressible
  euler equations.
\newblock {\em arXiv preprint arXiv:1311.2613}, 2013.

\bibitem{kearfott1996applications}
R~Baker Kearfott and Vladik Kreinovich.
\newblock {\em Applications of interval computations}, volume~3.
\newblock Kluwer Academic Dordrecht, 1996.

\bibitem{kiselev2013small}
Alexander Kiselev and Vladimir Sverak.
\newblock Small scale creation for solutions of the incompressible two
  dimensional euler equation.
\newblock {\em arXiv preprint arXiv:1310.4799}, 2013.

\bibitem{kovalevskaja1874theorie}
Sof'ja~V Kovalevskaja.
\newblock Zur theorie der partiellen differentialgleichungen.
\newblock 1874.

\bibitem{lanford1982computer}
Oscar~E Lanford~III.
\newblock A computer-assisted proof of the feigenbaum conjectures.
\newblock {\em Bulletin of the American Mathematical Society}, 6(3):427--434,
  1982.

\bibitem{leveque2007finite}
Randall~J LeVeque.
\newblock {\em Finite difference methods for ordinary and partial differential
  equations: steady-state and time-dependent problems}, volume~98.
\newblock Siam, 2007.

\bibitem{luo2013potentially}
Guo Luo and Thomas~Y Hou.
\newblock Potentially singular solutions of the 3d incompressible euler
  equations.
\newblock {\em arXiv preprint arXiv:1310.0497}, 2013.

\bibitem{majda2002vorticity}
Andrew~J Majda and Andrea~L Bertozzi.
\newblock {\em Vorticity and incompressible flow}, volume~27.
\newblock Cambridge University Press, 2002.

\bibitem{2008ieee}
Dan Zuras, Mike Cowlishaw, Alex Aiken, Matthew Applegate, David Bailey, Steve
  Bass, Dileep Bhandarkar, Mahesh Bhat, David Bindel, Sylvie Boldo, et~al.
\newblock Ieee standard for floating-point arithmetic.
\newblock {\em IEEE Std 754-2008}, pages 1--70, 2008.

\end{thebibliography}


\begin{thebibliography}{10}

\bibitem{chae2007nonexistence2}
Dongho Chae.
\newblock Nonexistence of asymptotically self-similar singularities in the
  euler and the navier--stokes equations.
\newblock {\em Mathematische Annalen}, 338(2):435--449, 2007.

\bibitem{chae2007nonexistence}
Dongho Chae.
\newblock Nonexistence of self-similar singularities for the 3d incompressible
  euler equations.
\newblock {\em Communications in Mathematical Physics}, 273(1):203--215, 2007.

\bibitem{chae2011self}
Dongho Chae.
\newblock On the self-similar solutions of the 3d euler and the related
  equations.
\newblock {\em Communications in Mathematical Physics}, 305(2):333--349, 2011.

\bibitem{choi2014on}
Kyudong Choi, Thomas~Y. Hou, Alexander Kiselev, Guo Luo, Vladimir Sverak, and
  Yao Yao.
\newblock On the fiinite-time blowup of a 1d model for the 3d axisymmetric
  euler equations.
\newblock {\em arXiv preprint arXiv:1407.4776}, 2014.

\bibitem{choi2013finite}
Kyudong Choi, Alexander Kiselev, and Yao Yao.
\newblock Finite time blow up for a 1d model of 2d boussinesq system.
\newblock {\em arXiv preprint arXiv:1312.4913}, 2013.

\bibitem{garling2007}
D.J.H. Garling.
\newblock {\em Inequalities: a journey into linear analysis}, volume~19.
\newblock Cambridge University Press Cambridge, 2007.

\bibitem{gibbon2008three}
John~D Gibbon.
\newblock The three-dimensional euler equations: Where do we stand?
\newblock {\em Physica D: Nonlinear Phenomena}, 237(14):1894--1904, 2008.

\bibitem{hou2013finite}
Thomas~Y Hou and Guo Luo.
\newblock On the finite-time blowup of a 1d model for the 3d incompressible
  euler equations.
\newblock {\em arXiv preprint arXiv:1311.2613}, 2013.

\bibitem{kiselev2013small}
Alexander Kiselev and Vladimir Sverak.
\newblock Small scale creation for solutions of the incompressible two
  dimensional euler equation.
\newblock {\em arXiv preprint arXiv:1310.4799}, 2013.

\bibitem{luo2013potentially}
Guo Luo and Thomas~Y Hou.
\newblock Potentially singular solutions of the 3d incompressible euler
  equations.
\newblock {\em arXiv preprint arXiv:1310.0497}, 2013.

\bibitem{majda2002vorticity}
Andrew~J Majda and Andrea~L Bertozzi.
\newblock {\em Vorticity and incompressible flow}, volume~27.
\newblock Cambridge University Press, 2002.

\bibitem{2008ieee}
Dan Zuras, Mike Cowlishaw, Alex Aiken, Matthew Applegate, David Bailey, Steve
  Bass, Dileep Bhandarkar, Mahesh Bhat, David Bindel, Sylvie Boldo, et~al.
\newblock Ieee standard for floating-point arithmetic.
\newblock {\em IEEE Std 754-2008}, pages 1--70, 2008.

\end{thebibliography}

\end{document}